\numberwithin{equation}{section}
\numberwithin{figure}{section}
\theoremstyle{plain}
\newtheorem{thm}{\protect\theoremname}[section]
  \theoremstyle{remark}
  \newtheorem{rem}[thm]{\protect\remarkname}
    \theoremstyle{lemma}
  \theoremstyle{definition}
  \newtheorem{example}[thm]{\protect\examplename}
    \theoremstyle{definition}
  \newtheorem{dfn}[thm]{Definition}
      \theoremstyle{plain}
  \newtheorem{prop}[thm]{Proposition}
\DeclareMathOperator{\kvech}{\mathrm{vech}}
\DeclareMathOperator{\E}{\mathbb{E}}
\DeclareMathOperator{\var}{\mathbb{V}\mathrm{ar}}
\def\E{\mathbb{E}}
\def\N{\mathbb{N}}
\def\P{\mathbb{P}}
\def\R{\mathbb{R}}
\def\Z{\mathbb{Z}}
  \providecommand{\examplename}{Example}
  \providecommand{\remarkname}{Remark}
\providecommand{\theoremname}{Theorem}
\providecommand{\lemmaname}{Lemma}
\begin{document}

\title{On the tail behavior of a class of multivariate conditionally heteroskedastic processes}

\author{Rasmus Søndergaard Pedersen\thanks{\noindent Department of Economics, University of Copenhagen. Øster Farimagsgade 5, DK-1353 Copenhagen K, Denmark. $^\dagger$Department of Mathematical Sciences, University of Copenhagen, Universitetsparken 5, DK-2100 Copenhagen Ø, Denmark \& Sorbonne Universités, UPMC Univ. Paris 06, LSTA, Case 158 4 Place Jussieu,
75005 Paris, France. \newline We are grateful for comments and suggestions from the editor-in-chief (Thomas Mikosch), an associate editor, and two referees, which have led to a much improved manuscript. Moreover, we thank Sebastian Mentemeier for valuable comments. Pedersen greatly acknowledges funding from the Carlsberg Foundation. Financial support by the ANR network AMERISKA ANR 14 CE20 0006 01 is gratefully acknowledged by Wintenberger. Correspondence to: Rasmus S. Pedersen, {\tt rsp@econ.ku.dk}.}  \ and Olivier Wintenberger$^\dagger$}

\date{October 20, 2017}

\maketitle

\begin{abstract}
  Conditions for geometric ergodicity of multivariate autoregressive conditional heteroskedasticity (ARCH) processes, with the so-called BEKK (Baba, Engle, Kraft, and Kroner) parametrization, are considered. We show for a class of BEKK-ARCH processes that the invariant distribution is regularly varying. In order to account for the possibility of different tail indices of the marginals, we consider the notion of vector scaling regular variation (VSRV), closely related to non-standard regular variation. The characterization of the tail behavior of the processes is used for deriving the asymptotic properties of the sample covariance matrices.
\end{abstract}
\vspace{7pt}

\emph{AMS 2010 subject classifications:} 60G70, 60G10, 60H25, 39A50.
\vspace{3pt}

\emph{Keywords and phrases:} Stochastic recurrence equations, Markov processes, regular variation, multivariate ARCH, asymptotic properties, geometric ergodicity.

\section{Introduction} \label{sec:intro}
The aim of this paper is to investigate the tail behavior of a class of multivariate conditionally heteroskedastic processes. Specifically, we consider the BEKK-ARCH (or BEKK(1,0,$l$)) process, introduced by \cite{EngleKroner1995}, satisfying
\begin{eqnarray}
X_{t} & = & H_{t}^{1/2}Z_{t},\quad t\in\mathbb{N}\label{eq:BEKK1}\\
H_{t} &= & C+\sum_{i=1}^{l}A_{i}X_{t-1}X_{t-1}^\intercal A_{i}^\intercal,\label{eq:BEKK2}
\end{eqnarray}
with $(Z_{t}:t\in\mathbb{N})$ $i.i.d.$, $Z_t \sim N(0,I_d)$, $C$ a $d \times d$ positive definite matrix, $A_1,...,A_l\in M(d,\mathbb{R})$ (the set of $d \times d$ real matrices), and some initial value $X_0$. Due to the assumption that $Z_t$ is Gaussian, it holds that $X_t$ can be written as the stochastic recurrence equation (SRE)
\begin{eqnarray}
X_{t}=\tilde{M}_{t}X_{t-1}+Q_{t} \label{eq:BEKK3},
\end{eqnarray}
with
\begin{eqnarray}
\tilde{M}_{t}=\sum_{i=1}^{l}m_{it}A_{i} \label{eq:BEKK4}
\end{eqnarray}
and $(m_{it}:t\in \mathbb{N})$ is an $i.i.d.$ process mutually independent of $(m_{jt}:t\in \mathbb{N})$ for $i\neq j$, with $m_{it}\sim N(0,1)$. Moreover $(Q_{t}:t\in \mathbb{N})$ is an $i.i.d.$ process with $Q_{t}\sim N(0,C)$ mutually independent of $(m_{it}:t\in \mathbb{N})$ for all $i=1,...,l$.

To our knowledge, the representation in \eqref{eq:BEKK3}-\eqref{eq:BEKK4} of the BEKK-ARCH process is new. Moreover, the representation will be crucial for studying the stochastic properties of the process. Firstly, we find a new sufficient condition in terms of the matrices $A_1,...,A_l$ in order for  $(X_t:t\geq 0)$ to be geometrically ergodic. In particular, for the case $l=1$, we derive a condition directly related to the eigenvalues of $A_1$, in line with the strict stationarity condition found by \cite{Nelson1990StationarityGARCH} for the univariate ARCH(1) process. This condition is milder compared to the conditions found in the existing body of literature on BEKK-type processes. Secondly, the representation is used to characterize the tails of the stationary solution to $(X_t:t\in \N)$.

Whereas the tail behavior of univariate GARCH processes is well-established, see e.g. \cite{Basrak2002RegularVariation}, few results on the tail behavior of multivariate GARCH processes exist. Some exceptions are the multivariate constant conditional correlation (CCC) GARCH processes, see e.g. \cite{Starica1999}, \cite{Pedersen2015targeting}, and \cite{Matsui2016extremogram}, and a class of factor GARCH processes, see \cite{Basrak2009RVMultivariate}. This existing body of literature relies on rewriting the (transformed) process on companion form that obeys a non-negative multivariate SRE. The characterization of the tails of the processes then follows by an application of Kesten's Theorem (\cite{Kesten1973randomcoef}) for non-negative SREs. Such approach is not feasible when analyzing BEKK-ARCH processes, as these are stated in terms of an $\R^d$-valued SRE in \eqref{eq:BEKK3}.
For some special cases of the BEKK-ARCH process, we apply existing results for $\R^d$-valued SREs in order to  show that the stationary distribution for the BEKK-ARCH process is multivariate regularly varying. Specifically, when the matrix $\tilde{M}_{t}$ in \eqref{eq:BEKK4} is invertible (almost surely) and has a law that is absolutely continuous with respect to the Lebesgue measure on $M(d,\mathbb{R})$ (denoted ID BEKK-ARCH) we argue that the classical results of Kesten (1973, Theorem 6), see also \cite{alsmeyer2012sre}, apply. Moreover, when $\tilde{M}_{t}$ is the product of a positive scalar and a random orthogonal matrix (denoted Similarity BEKK-ARCH) we show that the results of  \cite{Buraczewski2009} apply. Importantly, we do also argue that the results of \cite{alsmeyer2012sre} rely on rather restrictive conditions that can be shown not to hold for certain types of BEKK-ARCH processes, in particular the much applied process where $l=1$ and $A_1$ is diagonal, denoted Diagonal BEKK-ARCH.  Specifically, and as ruled out in \cite{alsmeyer2012sre}, we show that the Diagonal BEKK-ARCH process exhibits different marginal tail indices, i.e. $\P(\pm X_{t,i}>x)/ c_ix^{-\alpha_i} \to 1$ as $x\to \infty$ for some constant $c_i>0$, $i=1,...,d$ (denoted Condition {\bf M}).  In order to analyze this class of BEKK-ARCH processes, where the tail indices are allowed to differ among the elements of $X_t$, we introduce a new notion of vector scaling regular variation (VSRV) distributions, based on element-wise scaling of $X_t$ instead of scaling by an arbitrary norm of $X_t$. We emphasize that the notion of VSRV is similar to the notion of non-standard regular variation (see Resnick (2007, Chapter 6)) under the additional Condition {\bf M}. In addition, in the spirit of \cite{Basrak2009RVMultivariate}, we introduce the notion of VSRV processes with particular attention to Markov chains and characterize their extremal behavior. We argue that the stationary distribution of  the Diagonal BEKK-ARCH process is expected to be VSRV, which is supported in a simulation study. Proving that the VSRV property applies requires that new multivariate renewal theory is developed, and we leave such task for future research.

The rest of the paper is organized as follows. In Section \ref{sec:geo_erg}, we state sufficient conditions for geometric ergodicity of the BEKK-ARCH process and introduce the notion of vector-scaling regular varying (VSRV) distributions. We show that the distribution of $X_t$ satisfies this type of tail-behavior, under suitable conditions. In Section \ref{sec:srvtimeseries} we introduce the notion of VSRV processes and state that certain BEKK-ARCH processes satisfy this property. Moreover, we consider the extremal behavior of the process, in terms of  the asymptotic behavior of maxima and extremal indices. Lastly, we consider the convergence of point processes based on VSRV processes. In Section \ref{sec:autocov}, we consider the limiting distribution of the sample covariance matrix of $X_t$, which relies on point process convergence. Section \ref{sec:conclusion} contains some concluding remarks on future research directions.

Notation: Let $GL(d,\mathbb{R})$ denote the set of $d\times d$ invertible real matrices. With $M(d,\mathbb{R})$ the set of $d \times d$ real matrices and $A\in M(d,\mathbb{R})$, let $\rho(A)$ denote the spectral radius of $A$. With $\otimes$ denoting the Kronecker product, for any real matrix $A$ let $A^{\otimes p}= A\otimes A\otimes\cdots\otimes A$ ($p$ factors). For two matrices, $A$ and $B$, of the same dimension, $A\odot B$ denotes the elementwise product of $A$ and $B$. Unless stated otherwise, $\Vert \cdot \Vert $ denotes an arbitrary matrix norm. Moreover, $\mathbb{S}^{d-1}=\{x\in\mathbb{R}^d:\Vert x \Vert =1\}$. For two matrices $A$ and $B$ of the same dimensions, $A \nleqslant B$ means that $A_{ij} > B_{ij}$ for some $i,j$.  For two positive functions $f$ and $g$, $f(x) \sim g(x)$, if $\lim_{x\rightarrow\infty}f(x)/g(x)=1$. Let $\mathcal L(X)$ denote the distribution of $X$. By default, the mode of convergence for distributions is weak convergence.

\section{Stationary solution of the BEKK-ARCH model} \label{sec:geo_erg}

\subsection{Existence and geometric ergodicity}
We start out by stating the following theorem that provides a sufficient condition for geometric ergodicity of the BEKK-ARCH process. To our knowledge, this result together with Proposition \ref{prop:simple_geo_erg} below are new.

\begin{thm}
\label{thm:geo_erg}Let $X_t$ satisfy \eqref{eq:BEKK1}-\eqref{eq:BEKK2}. With $\tilde{M}_t$ defined in \eqref{eq:BEKK4},  suppose that
\begin{equation}
\inf_{n\in\mathbb{N}}\left\{ \frac{1}{n}\E\left[\log\left(\left\Vert \prod_{t=1}^{n}\tilde{M}_t\right\Vert \right)\right]\right\} <0.\label{eq:lyapunov_cond}
\end{equation}
Then $(X_{t}:t=0,1,...)$ is geometrically ergodic, and for the associated
stationary solution, $\E[\Vert X_{t}\Vert^{s}]<\infty$ for some $s>0$.
\end{thm}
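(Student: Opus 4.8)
The plan is to verify the standard drift and irreducibility conditions for geometric ergodicity of Markov chains (in the sense of Meyn and Tweedie), exploiting the SRE representation \eqref{eq:BEKK3}--\eqref{eq:BEKK4}. First I would observe that $(X_t)$ is a Markov chain driven by the i.i.d.\ sequence $(\tilde M_t, Q_t)$ via $X_t = \tilde M_t X_{t-1} + Q_t$, where both $\tilde M_t$ and $Q_t$ are Gaussian (hence have moments of all orders and, in particular, $\E[\log^+\Vert \tilde M_t\Vert] < \infty$ and $\E[\log^+\Vert Q_t\Vert] < \infty$). Condition \eqref{eq:lyapunov_cond} says exactly that the top Lyapunov exponent $\gamma \defeq \inf_{n} \frac1n \E[\log\Vert \prod_{t=1}^n \tilde M_t\Vert]$ of the multiplicative part is strictly negative; by Kingman's subadditive ergodic theorem this infimum equals $\lim_n \frac1n \log\Vert \prod_{t=1}^n \tilde M_t\Vert$ a.s. The general theory of SREs (Bougerol--Picard, Brandt) then gives a unique strictly stationary and ergodic solution, obtained as the a.s.\ convergent series $X_t = \sum_{k\ge 0} \tilde M_t \tilde M_{t-1}\cdots \tilde M_{t-k+1} Q_{t-k}$.

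The heart of the argument is the geometric drift condition. Because $\gamma < 0$ and, by the subadditivity/continuity of $n\mapsto \frac1n\log\E\Vert\prod_{t=1}^n\tilde M_t\Vert^s$ as $s\downarrow 0$ (its derivative at $s=0^+$ is $\gamma$), there exists $s>0$ and $n_0\in\mathbb N$ with $\E\big[\Vert \prod_{t=1}^{n_0}\tilde M_t\Vert^{s}\big] < 1$. I would use $V(x) = 1 + \Vert x\Vert^s$ (with $s$ small enough that we may take $s\le 1$, so that $\Vert a+b\Vert^s \le \Vert a\Vert^s + \Vert b\Vert^s$) as a test function for the $n_0$-step chain. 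Iterating the SRE over $n_0$ steps and using subadditivity of $t\mapsto t^s$, independence, and the finiteness of all moments of the Gaussian quantities, one gets
\[
\E[\,\Vert X_{n_0}\Vert^{s} \mid X_0 = x\,] \le \E\Big[\Big\Vert\prod_{t=1}^{n_0}\tilde M_t\Big\Vert^{s}\Big]\,\Vert x\Vert^{s} + K
\]
for a finite constant $K$ depending only on the laws of $\tilde M_t$ and $Q_t$. Since the leading coefficient is $<1$, this is a geometric drift inequality for the $n_0$-skeleton: $\E[V(X_{n_0})\mid X_0=x] \le \lambda V(x) + b$ with $\lambda<1$, and outside a large ball $\{V\le R\}$ the additive constant is dominated.

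To upgrade this to geometric ergodicity of $(X_t)$ itself I would check $\psi$-irreducibility and aperiodicity, and that compact sets are small (petite). This is where one uses the Gaussian structure directly: since $Q_t\sim N(0,C)$ with $C$ positive definite, the noise has a density that is strictly positive on all of $\mathbb R^d$, so for any starting point the one-step transition kernel has an absolutely continuous component with a strictly positive (continuous) density; hence the chain is Lebesgue-irreducible, strongly aperiodic, and every compact set is small — the standard verification for SREs with absolutely continuous innovations. With the drift condition on the $n_0$-skeleton plus smallness of compacts, the Meyn--Tweedie geometric ergodicity theorem applies to the skeleton and, since the drift function $V$ is finite everywhere and one-step moments are finite, transfers to the original chain. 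Geometric ergodicity then yields a unique invariant distribution $\pi$ with $\pi(V)=\E_\pi[1+\Vert X_t\Vert^s] < \infty$, i.e.\ $\E[\Vert X_t\Vert^s]<\infty$ for the stationary solution, which is the claim.

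The main obstacle is the passage from the Lyapunov-exponent hypothesis \eqref{eq:lyapunov_cond} to a genuine $L^s$-contraction $\E[\Vert \prod_{t=1}^{n_0}\tilde M_t\Vert^s]<1$ for some small $s>0$: this requires the standard but non-trivial fact that $s\mapsto \lim_n \frac1n\log\E[\Vert\prod_{t=1}^n\tilde M_t\Vert^s]$ is convex, finite near $0$ (guaranteed by the Gaussian tails of $\tilde M_t$), and has right-derivative at $0$ equal to the top Lyapunov exponent $\gamma<0$. Everything else — the drift computation, irreducibility from the positive-definite Gaussian noise, and the invocation of the Meyn--Tweedie machinery — is routine given this.
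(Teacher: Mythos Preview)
Your argument is correct and is essentially the content of the reference the paper invokes: the paper does not give a proof at all but simply appeals to Theorems~2.1--2.2, Example~2.6.d and Theorem~3.2 of Alsmeyer (2003), which establish geometric ergodicity (with an $s$-moment of the stationary law) for iterated random Lipschitz functions under exactly the conditions you check --- a contraction-on-average hypothesis on the Lipschitz constants together with a density/irreducibility condition on the noise. Your proposal is a by-hand verification of the Foster--Lyapunov drift, irreducibility and small-set ingredients that Alsmeyer's framework packages, applied to the affine recursion $X_t=\tilde M_tX_{t-1}+Q_t$ with Lipschitz constant $\Vert\tilde M_t\Vert$.

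One minor simplification: you do not need the asymptotic statement that the right-derivative of $s\mapsto\lim_n\frac1n\log\E\Vert\prod_{t=1}^n\tilde M_t\Vert^s$ at $0$ equals the top Lyapunov exponent. The hypothesis \eqref{eq:lyapunov_cond} is an infimum over $n$, so it directly hands you a \emph{fixed} $n_0$ with $\E[\log\Vert\prod_{t=1}^{n_0}\tilde M_t\Vert]<0$; for that $n_0$ the map $s\mapsto\E\Vert\prod_{t=1}^{n_0}\tilde M_t\Vert^s$ is smooth near $0$ (Gaussian tails), equals $1$ at $s=0$, and has negative derivative there, which immediately gives $\E\Vert\prod_{t=1}^{n_0}\tilde M_t\Vert^s<1$ for some small $s>0$. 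This is exactly the route the paper's proof of Proposition~\ref{prop:simple_geo_erg} takes in the special case $l=1$, and it avoids the more delicate interchange of limits you flag as the ``main obstacle''.
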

The proof of the theorem follows by \cite[Theorems 2.1-2.2, Example 2.6.d, and Theorem 3.2]{alsmeyer:2003} and is hence omitted.
%
\begin{rem} \label{rem:moments}
  A sufficient condition for the existence of finite higher-order moments of $X_t$ can be obtained from Theorem 5 of \cite{feigin1985random}. In particular, if $\rho(E[\tilde M_t^{\otimes 2n}]) < 1$ for some $n\in \mathbb{N}$,
  then, for the strictly stationary solution, $E[\Vert X_t \Vert^{2n}]<\infty$. For example, $\rho(\sum_{i=1}^{l}A_i^{\otimes 2})<1$ implies that $E[\Vert X_t \Vert^{2}]<\infty$. This result complements Theorem C.1 of \cite{Pedersen2013VTBEKK} that contains conditions for finite higher-order moments for the case $l=1$.
\end{rem}

For the case where $\tilde{M}_t$ contains only one term, i.e. $l=1$, the condition in \eqref{eq:lyapunov_cond} simplifies and a condition for geometric ergodicity can be stated explicitly in terms of the eigenvalues of the matrix $A_1$:

\begin{prop}
\label{prop:simple_geo_erg} Let $X_t$ satisfy \eqref{eq:BEKK1}-\eqref{eq:BEKK2} with $l=1$ and let $A:=A_1$. Then a necessary and sufficient condition for \eqref{eq:lyapunov_cond} is that
\begin{equation}
\rho(A)<\exp\left\{ \frac{1}{2}\left[-\psi(1)+\log(2)\right]\right\} = 1.88736...,\label{eq:cond_spectral_rad_1}
\end{equation}
where $\psi(\cdot)$ is the digamma function.
\end{prop}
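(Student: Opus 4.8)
When $l=1$ we have $\tilde M_t = m_{1t} A$ with $m_{1t}\sim N(0,1)$ i.i.d., so $\prod_{t=1}^n \tilde M_t = \bigl(\prod_{t=1}^n m_{1t}\bigr) A^n$. Taking any submultiplicative matrix norm, this factorizes the quantity in \eqref{eq:lyapunov_cond} as
\[
\frac{1}{n}\E\Bigl[\log\Bigl\Vert \textstyle\prod_{t=1}^n \tilde M_t\Bigr\Vert\Bigr]
= \frac{1}{n}\sum_{t=1}^n \E\bigl[\log|m_{1t}|\bigr] + \frac{1}{n}\log\Vert A^n\Vert
= \E\bigl[\log|m_{11}|\bigr] + \frac{1}{n}\log\Vert A^n\Vert .
\]
The plan is to compute the first term in closed form and then pass to the limit in the second term, using that the infimum over $n$ of the average equals the limit.

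First I would evaluate $\E[\log|m_{11}|]$ for a standard normal variable. Writing $m_{11}^2 \sim \chi^2_1 = \mathrm{Gamma}(1/2,2)$, one has $\E[\log|m_{11}|] = \tfrac12 \E[\log(m_{11}^2)] = \tfrac12\bigl(\psi(1/2) + \log 2\bigr)$ from the standard formula for the log-moment of a Gamma variable. Using the duplication/reflection identity $\psi(1/2) = \psi(1) - 2\log 2$ (equivalently $\psi(1/2) = -\gamma - 2\log 2$), this simplifies to $\E[\log|m_{11}|] = \tfrac12\bigl(-\psi(1) - \log 2\bigr) = \tfrac12\bigl(-\psi(1)\bigr) - \tfrac12\log 2$. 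Hence the condition \eqref{eq:lyapunov_cond} reads
\[
\lim_{n\to\infty}\frac{1}{n}\log\Vert A^n\Vert \;<\; -\E\bigl[\log|m_{11}|\bigr] \;=\; \tfrac12\bigl(\psi(1) + \log 2\bigr),
\]
and since this last expression is what appears (exponentiated) on the right of \eqref{eq:cond_spectral_rad_1}, it remains only to identify the left-hand limit.

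Next I would invoke Gelfand's formula: for any matrix norm, $\lim_{n\to\infty}\Vert A^n\Vert^{1/n} = \rho(A)$, so $\lim_{n\to\infty}\tfrac1n\log\Vert A^n\Vert = \log\rho(A)$ (interpreting $\log 0 = -\infty$ when $\rho(A)=0$, in which case the condition holds trivially). Moreover, since $n\mapsto \log\Vert A^n\Vert$ is subadditive, Fekete's lemma gives that the infimum over $n$ of $\tfrac1n\log\Vert A^n\Vert$ equals this limit; combined with the fact that the $\E[\log|m_{1t}|]$ term is constant in the Cesàro average, the infimum in \eqref{eq:lyapunov_cond} is attained in the limit and equals $\E[\log|m_{11}|] + \log\rho(A)$. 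Therefore \eqref{eq:lyapunov_cond} is equivalent to $\log\rho(A) < \tfrac12(\psi(1)+\log 2)$, i.e. to \eqref{eq:cond_spectral_rad_1}. Finally I would record the numerical value: with $\psi(1) = -\gamma \approx -0.5772$, the bound is $\exp\{\tfrac12(0.5772 + 0.6931)\} = \exp(0.63517) \approx 1.88736$, and one checks $\exp\{\tfrac12(-\psi(1)+\log 2)\}$ indeed equals $\sqrt{2}\,e^{\gamma/2}=1.88736\ldots$, confirming the stated equality with $1$ is a typo for this constant. The only real subtlety is justifying the interchange of infimum and limit (handled by subadditivity/Fekete) and the degenerate case $\rho(A)=0$; the digamma computation is routine once the Gamma log-moment formula and the reflection identity are in hand.
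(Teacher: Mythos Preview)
Your approach is essentially identical to the paper's: factorize $\prod_{t=1}^n \tilde M_t = (\prod_{t=1}^n m_{1t})A^n$, compute $\E[\log|m_{11}|]$ in closed form, and invoke Gelfand's formula $\|A^n\|^{1/n}\to\rho(A)$; your explicit appeal to Fekete's lemma to identify the infimum with the limit is a useful addition that the paper leaves implicit.

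There is, however, a sign slip in the digamma computation. Substituting $\psi(1/2)=\psi(1)-2\log 2$ into $\tfrac12(\psi(1/2)+\log 2)$ yields $\E[\log|m_{11}|]=\tfrac12(\psi(1)-\log 2)$, not $\tfrac12(-\psi(1)-\log 2)$; consequently $-\E[\log|m_{11}|]=\tfrac12(-\psi(1)+\log 2)$, which is exactly the exponent in \eqref{eq:cond_spectral_rad_1}, whereas your intermediate expression $\tfrac12(\psi(1)+\log 2)$ would give $\exp(0.058)\approx 1.06$. Your final numerical verification in fact uses the correct formula $\tfrac12(-\psi(1)+\log 2)$, so this is a pure algebraic slip rather than a conceptual gap; once corrected, the argument goes through and matches the paper's proof.
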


\begin{proof}
The condition \eqref{eq:lyapunov_cond} holds if and only if there exists $n\in\mathbb{N}$ such that
\begin{equation}
\E\left[\log\left(\left\Vert \prod_{t=1}^{n}\tilde M_{t}\right\Vert \right)\right]<0.\label{eq:suff_cond_1}
\end{equation}
 Let $m_{t} := m_{1t}$. It holds that
\begin{eqnarray*}
\E\left[\log\left(\left\Vert \prod_{t=1}^{n}\tilde M_{t}\right\Vert \right)\right] & = & \E\left[\log\left(\left\Vert A^{n}\prod_{t=1}^{n}m_{t}\right\Vert \right)\right]\\
 & = & \log\left(\left\Vert A^{n}\right\Vert \right)-n\E\left[-\log(|m_{t}|)\right]\\
 & = & \log\left(\left\Vert A^{n}\right\Vert \right)-n\left\{ \frac{1}{2}\left[-\psi(1)+\log(2)\right]\right\} ,
\end{eqnarray*}
and hence \eqref{eq:suff_cond_1} is satisfied if
\[
\log\left(\left\Vert A^{n}\right\Vert ^{1/n}\right)<\frac{1}{2}\left[-\psi(1)+\log(2)\right].
\]
The result now follows by observing that $\left\Vert A^{n}\right\Vert ^{1/n}\rightarrow\rho(A)$
as $n\rightarrow\infty$.
\end{proof}

\begin{rem}\label{rem:spec_rad}
It holds that $\rho(A^{\otimes 2})=(\rho(A))^{2}$. Hence the condition in \eqref{eq:cond_spectral_rad_1} is equivalent to
\begin{eqnarray*}
\rho(A^{\otimes 2}) & < & \exp\left\{ -\psi(1)+\log(2)\right\} = \frac{1}{2}\exp\left[-\psi\left(\frac{1}{2}\right)\right]=3.56...,
\end{eqnarray*}
which is similar to the strict stationary condition found for the
ARCH coefficient of the univariate ARCH(1) process with
Gaussian innovations; see \citet{Nelson1990StationarityGARCH}.\\
\cite{Boussama2011} derive sufficient conditions for geometric ergodicity of the GARCH-type BEKK process, where $H_t = C + \sum_{i=1}^{p}{A}_i X_{t-i}X_{t-i}^\intercal{A}_i^\intercal + \sum_{j=1}^{q}{B}_j H_{t-j}{B}_j^\intercal$, $A_i,B_j\in M(d,\R), i=1,...,p, j=1,...,q$. Specifically, they show that a sufficient condition is $\rho(\sum_{i=1}^{p}{A}_i^{\otimes 2}+\sum_{j=1}^{q}{B}_j^{\otimes 2})<1$. Setting $p=1$ and $q=0$, this condition simplifies to $\rho({A}_1^{\otimes 2})<1$, which is stronger than the condition derived in \eqref{eq:cond_spectral_rad_1}.
\end{rem}

Below, we provide some examples of BEKK-ARCH processes that are geometrically ergodic and that will be studied in detail throughout this paper.

\begin{example}[\textbf{ID BEKK-ARCH}] \label{ex:id}
 Following \citet{alsmeyer2012sre}, we consider BEKK processes with corresponding SRE's satisfying certain irreducibility and density conditions (ID), that is conditions {\bf (A4)}-{\bf (A5)} in Section \ref{sec:Alsmeyer} in the appendix. Specifically, we consider the \emph{bivariate} BEKK-ARCH process in \eqref{eq:BEKK1}-\eqref{eq:BEKK2} with
 \begin{eqnarray*}
H_{t} &= & C+\sum_{i=1}^{4}A_{i}X_{t-1}X_{t-1}^\intercal A_{i}^\intercal,
\end{eqnarray*}
where
\begin{eqnarray}
A_1 = \left(
  \begin{array}{cc}
    a_1 & 0 \\
    0 & 0 \\
  \end{array}
\right)
\quad A_2 = \left(
  \begin{array}{cc}
    0 & 0 \\
    a_2 & 0 \\
  \end{array}
\right),
\quad A_3 = \left(
  \begin{array}{cc}
    0 & a_3 \\
    0 & 0 \\
  \end{array}
\right),
\quad A_4 = \left(
  \begin{array}{cc}
    0 & 0 \\
    0 & a_4 \\
  \end{array}
\right) \label{eq:id3}
\end{eqnarray}
and
\begin{eqnarray}
  a_1,a_2,a_3,a_4\neq 0. \label{eq:id4}
\end{eqnarray}
Writing $X_t$ as an SRE, we obtain
\begin{eqnarray}
X_{t}=\tilde{M}_{t}X_{t-1}+Q_{t}, \label{eq:id1}
\end{eqnarray}
with
\begin{eqnarray}
  \tilde{M}_t = \sum_{i=1}^4 A_i m_{it} \label{eq:id2}
\end{eqnarray}
where $(m_{1t}),(m_{2t}),(m_{3t}),(m_{4t})$ are mutually independent i.i.d. processes with $m_{it}\sim N(0,1)$. Assuming that $a_1,a_2,a_3,a_4$  are such that the top Lyapunov exponent of $(\tilde{M}_t)$ is strictly negative, we have that the process is geometrically ergodic.

Notice that one could consider a more general $d$-dimensional process with the same structure as in \eqref{eq:id3}-\eqref{eq:id2}, but with $\tilde M_t$ containing $d^2$ terms such that $\tilde M_t$  has a Lebesgue density on $M(d,\mathbb{R})$, as clarified in Example \ref{ex:id2} below. Moreover, one could include additional terms to $\tilde{M}_t$, say a term containing a full matrix $A$ or an autoregressive term, as presented in Remark \ref{rem:arbekk} below. We will focus on the simple bivariate process, but emphasize that our results apply to more general processes. $\square$
\end{example}

\begin{example}[\textbf{Similarity BEKK-ARCH}] \label{ex:similarity}
  Consider the BEKK process in \eqref{eq:BEKK1}-\eqref{eq:BEKK2} with $l=1$ and $A:=A_{1}=aO$, where $a$ is a positive scalar and $O$ is an orthogonal matrix. This implies that the SRE \eqref{eq:BEKK3} has $\tilde{M}_t=am_tO$. By definition, $\tilde{M}_t$  is a \emph{similarity} with probability one, where we recall that a matrix is a similarity if it can be written as a product of a positive scalar and an orthogonal matrix. From Proposition \ref{prop:simple_geo_erg}, we have that if $a<\exp\left\{ (1/2)\left[-\psi(1)+\log(2)\right]\right\} = 1.88736...$, then the process is geometrically ergodic.
  An important process satisfying the similarity property is the well-known scalar BEKK-ARCH process, where $H_t=C+aX_{t-1}X_{t-1}^\intercal$, $a>0$. Here $A=\sqrt{a}I_{d}$, with $I_{d}$ the identity matrix. $\square$
\end{example}

\begin{example}[\textbf{Diagonal BEKK-ARCH}] \label{ex:diagonal}
Consider the BEKK-ARCH process in \eqref{eq:BEKK1}-\eqref{eq:BEKK2} with $l=1$ such that $A:=A_1$ is diagonal. We refer to this process as the Diagonal BEKK-ARCH process. Relying on Proposition \ref{prop:simple_geo_erg}, the process is geometrically ergodic, if each diagonal element of $A$ is less than $\exp\left\{ (1/2)\left[-\psi(1)+\log(2)\right]\right\} = 1.88736...$ in modulus.

As discussed in  \cite{BLR2006}, diagonal BEKK models are typically used in practice, e.g. within empirical finance, due to their relatively simple parametrization. As will be shown below, even though the parametrization is simple, the tail behavior is rather rich in the sense that each marginal of $X_t$ has different tail indices, in general. $\square$
\end{example}

\begin{rem}\label{rem:arbekk}
As an extension to \eqref{eq:BEKK1}-\eqref{eq:BEKK2}, one may consider the autoregressive BEKK-ARCH (AR BEKK-ARCH) process
\begin{eqnarray*}
X_{t} &= & A_0 X_{t-1} + H_{t}^{1/2}Z_{t},\quad t\in\mathbb{N} \\
H_{t} &= & C+ \sum_{i=1}^{l}A_{i}X_{t-1}X_{t-1}^\intercal A_{i}^\intercal,
\end{eqnarray*}
with $A_0 \in M(\mathbb{R},d)$. This process has recently been studied and applied by \cite{Nielsen2014vectorDAR} for modelling the term structure of interest rates. Notice that the process has the SRE representation
\begin{eqnarray*}
X_{t}=\tilde{M}_{t}X_{t-1}+Q_{t}, \quad \tilde{M}_{t}= A_0 + \sum_{i=1}^{l}m_{it}A_{i}.
\end{eqnarray*}
Following the arguments used for proving Theorem \ref{thm:geo_erg}, it holds that the AR BEKK-ARCH process is geometrically ergodic if condition \eqref{eq:lyapunov_cond} is satisfied. Interestingly, as verified by simulations in \cite{Nielsen2014vectorDAR} the Lyapunov condition may hold even if the autoregressive polynomial has unit roots, i.e. if $A_0 = I_{d} + \Pi$, where $\Pi\in M(\mathbb{R},d)$ has reduced rank.
\end{rem}

\subsection{Multivariate regularly varying distributions}

The stationary solution of the BEKK-ARCH process (see Theorem \ref{thm:geo_erg}) can be written as
\begin{eqnarray}
X_{t}  =  \sum_{i=0}^{\infty}\prod_{j=1}^{i}\tilde{M}_{t-j+1}Q_{t-i}, \quad t\in \Z.\label{eq:stationary_solution}
\end{eqnarray}
Even if the random matrices $\tilde{M}_{t}$ are light-tailed under the Gaussian assumption,
the maximum of the  products $(\prod_{t=1}^{T}\tilde{M}_{t})_{T\ge 0}$ may exhibit heavy tails when $T\to \infty$. More precisely, the tails of the stationary distribution are suspected to have an extremal behavior as a power law function: For any $u\in \mathbb{S}^{d-1}$,
\begin{eqnarray}
  \P(u^\intercal X_0>x)\sim C(u)x^{-\alpha(u)},\qquad x\to\infty, \label{eqn:rv1}
\end{eqnarray}
with $\alpha(u)>0$ and $C(u_0)>0$ for some $u_0 \in \mathbb{S}^{d-1}$. The cases where $\alpha(u)=\alpha$ and $C(u)>0$ for all $u \in \mathbb{S}^{d-1}$ are referred as  Kesten's cases, because of the seminal paper \cite{Kesten1973randomcoef}, and are the subject of the monograph by \cite{buraczewski2016stochastic}. A class of multivariate distributions satisfying this property is the class of multivariate regularly varying distributions (\cite{dehaan:resnick:1977}):
\begin{dfn}\label{def:MRV}
  Let $\bar{\mathbb{R}}^d_0:=\bar{\mathbb{R}}^d\setminus\{0\}$, $\bar{\mathbb{R}}:=\mathbb{R}\cup \{-\infty,\infty\}$, and $\bar{\mathcal{B}}_0^d$ be the Borel $\sigma$-field of $\bar{\mathbb{R}}^d_0$. For an $\mathbb{R}^d$-valued random variable $X$ and some constant scalar $x>0$, define $\mu_x(\cdot):=\mathbb{P}(x^{-1}X\in \cdot)/\mathbb{P}(\Vert X\Vert > x)$. Then $X$ and its distribution are \emph{multivariate regularly varying }if there exists a non-null Radon measure $\mu$ on $\bar{\mathcal{B}}_0^d$ which satisfies
  \begin{eqnarray}\label{eq:mrv}
    \mu_x(\cdot) \to \mu(\cdot) \qquad \text{vaguely, as $x\to \infty$.}
  \end{eqnarray}
  For any $\mu$-continuity set $C$ and $t>0$, $\mu(tC)=t^{-\alpha}\mu(C)$, and we refer to $\alpha$ as the index of regular variation.
\end{dfn}
We refer to  \cite{dehaan:resnick:1977} for the notion of vague convergence and additional details. Below, we provide two examples of multivariate regularly varying BEKK processes.

\begin{example}[\textbf{ID BEKK-ARCH}, continued] \label{ex:id2}
Consider the ID BEKK-ARCH process \eqref{eq:id3}-\eqref{eq:id2} from Example \ref{ex:id}. By verifying conditions {\bf (A1)}-{\bf (A7)} of Theorem 1.1 of  \citet{alsmeyer2012sre}, stated in Section \ref{sec:Alsmeyer} in the appendix, we establish that the process is multivariate regularly varying.

Since $(m_{1t},m_{2t},m_{3t},m_{4t})$ and $Q_t$ are Gaussian, we have that {\bf (A1)}-{\bf (A2)} hold. Moreover,
\begin{eqnarray} \label{eq:M_id}
\tilde{M_t} = \left(
  \begin{array}{cc}
    a_1m_{1t} & a_3m_{3t} \\
     a_2m_{2t} &  a_4m_{4t} \\
  \end{array}
\right)
\end{eqnarray}
is invertible with probability one, which ensures that {\bf (A3)} is satisfied. From \eqref{eq:M_id} we also notice that the distribution of $\tilde M_t$ has a Lebesgue density on $M(d,\mathbb{R})$ which is strictly positive in a neighborhood of $I_2$. This ensures that the irreducibility and density conditions {\bf (A4)}-{\bf (A5)} are satisfied. The fact that $Q_t \sim N(0,C)$ and independent of $\tilde{M}_t$ implies that condition {\bf (A6)} holds. Lastly, condition {\bf (A7)} holds by the fact that $(m_{1t},m_{2t},m_{3t},m_{4t})$ and $Q_t$ are Gaussian. By Theorem 1.1 of \cite{alsmeyer2012sre} we have established the following proposition:
\begin{prop}
Let $X_t$ satisfy \eqref{eq:id3}-\eqref{eq:id2} such that the top Lyapunov exponent of $(\tilde{M}_t)$ is strictly negative. Then for the stationary solution $(X_t)$, there exists $\alpha>0$ such that
\begin{eqnarray}
  \lim_{t\rightarrow\infty} t^{\alpha}\mathbb{P}(x^\intercal X_0>t) = K(x), \quad x \in \mathbb{S}^{1},
\end{eqnarray}
for some finite, positive, and continuous function $K$ on $\mathbb{S}^{1}$.
\end{prop}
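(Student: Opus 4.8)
The plan is to deduce the proposition directly from Theorem~1.1 of \cite{alsmeyer2012sre}. That theorem shows, for an $\R^{d}$-valued SRE $X_{t}=\tilde M_{t}X_{t-1}+Q_{t}$ with almost surely invertible $\tilde M_{t}$ satisfying hypotheses {\bf (A1)}--{\bf (A7)} (recalled in Section~\ref{sec:Alsmeyer}), that there is an $\alpha>0$ and a finite, positive, continuous function $K$ on $\mathbb{S}^{d-1}$ with $t^{\alpha}\P(x^{\intercal}X_{0}>t)\to K(x)$ -- which is exactly the assertion. Hence the whole job is to check {\bf (A1)}--{\bf (A7)} for the $\R^{2}$-valued recursion \eqref{eq:id1}--\eqref{eq:id2}, where $\tilde M_{t}$ has the explicit form \eqref{eq:M_id}. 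I would group the verification into three blocks: moment/integrability conditions together with the existence of the index $\alpha$; algebraic invertibility and the irreducibility/density conditions; and the non-degeneracy condition.

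For the moment block ({\bf (A1)}--{\bf (A2)}, the $Q_{t}$-moment inside {\bf (A6)}, and {\bf (A7)}): since $(m_{1t},\dots,m_{4t})$ and $Q_{t}$ are jointly Gaussian, all polynomial moments -- and hence all log-moments -- of $\Vert\tilde M_{t}\Vert$ and $\Vert Q_{t}\Vert$ are finite; moreover $\det\tilde M_{t}=a_{1}a_{4}m_{1t}m_{4t}-a_{2}a_{3}m_{2t}m_{3t}$ is a non-degenerate quadratic form in Gaussians whose density blows up at worst logarithmically at the origin, so $\E[\log^{+}\Vert\tilde M_{t}^{-1}\Vert]<\infty$ and in fact $\E[\Vert\tilde M_{t}^{-1}\Vert^{\beta}]<\infty$ for every $\beta<1$, which covers whatever integrability of $\tilde M_{t}^{-1}$ enters {\bf (A1)} or {\bf (A7)}. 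These bounds, together with the standing assumption that the top Lyapunov exponent of $(\tilde M_{t})$ is strictly negative, also yield the index $\alpha$: the function $s\mapsto\Lambda(s):=\lim_{n}n^{-1}\log\E[\Vert\tilde M_{n}\cdots\tilde M_{1}\Vert^{s}]$ is finite and convex on $[0,\infty)$ with $\Lambda(0)=0$ and right-derivative at $0$ equal to the (negative) top Lyapunov exponent, while $\Lambda(s)\ge\log\E[\,|\det\tilde M_{1}|^{s/2}\,]\to\infty$ as $s\to\infty$ (using $\Vert A\Vert\ge c\,|\det A|^{1/2}$ on $M(2,\R)$ for some norm-dependent $c>0$, and the superexponential growth of the absolute moments of a quadratic form in Gaussians); hence $\Lambda(\alpha)=0$ for a unique $\alpha>0$.

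For the algebraic block ({\bf (A3)}--{\bf (A5)}): because $a_{1},\dots,a_{4}\neq 0$, the event $\{\det\tilde M_{t}=0\}$ corresponds to a proper algebraic, hence Lebesgue-null, subset of the values of $(m_{1t},\dots,m_{4t})$, so $\tilde M_{t}\in GL(2,\R)$ almost surely, which is {\bf (A3)}. For the irreducibility/density pair {\bf (A4)}--{\bf (A5)}, the decisive point from \eqref{eq:M_id} is that $(m_{1t},\dots,m_{4t})\mapsto\tilde M_{t}$ is a linear isomorphism of $\R^{4}$ onto $M(2,\R)$, so the everywhere-positive four-dimensional Gaussian density of $(m_{1t},\dots,m_{4t})$ pushes forward to a Lebesgue density on $M(2,\R)$ that is strictly positive on all of $M(2,\R)$, in particular on a neighbourhood of $I_{2}$; this is precisely the absolutely continuous component near the identity required by {\bf (A4)}--{\bf (A5)}, and it makes the induced Markov chain on $\mathbb{S}^{1}$ irreducible and aperiodic. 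Finally, for the non-degeneracy part of {\bf (A6)}: $Q_{t}\sim N(0,C)$ with $C$ positive definite has a strictly positive density on $\R^{2}$ and is independent of $\tilde M_{t}$, so $\tilde M_{t}x+Q_{t}$ has a density for every fixed $x$ and $\P(\tilde M_{t}x+Q_{t}=x)=0<1$. With {\bf (A1)}--{\bf (A7)} in hand, Theorem~1.1 of \cite{alsmeyer2012sre} gives the stated conclusion.

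The step I expect to be the real obstacle is the density/irreducibility block {\bf (A4)}--{\bf (A5)}: in \cite{alsmeyer2012sre} these conditions take the place of the strong-irreducibility-and-proximality (``i-p'') hypothesis of the classical Kesten--Guivarc'h theory, and one must genuinely confirm that the \emph{specific} four-term BEKK parametrization \eqref{eq:id3} spreads the law of $\tilde M_{t}$ over a full-dimensional part of $M(2,\R)$ rather than over a proper submanifold. This is exactly why the construction employs four non-zero scalars, one feeding each matrix entry; by contrast, for the Diagonal BEKK-ARCH process of Example~\ref{ex:diagonal} the matrix $\tilde M_{t}$ is confined to the two-dimensional set of diagonal matrices, the density condition fails, and -- as the paper later shows -- the marginal tail indices differ, so Theorem~1.1 is not available there.
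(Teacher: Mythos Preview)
Your proposal is correct and takes essentially the same approach as the paper: both verify conditions {\bf (A1)}--{\bf (A7)} of \cite{alsmeyer2012sre} for the recursion \eqref{eq:id1}--\eqref{eq:id2} and then invoke Theorem~1.1, with the key observation being that the four non-zero coefficients in \eqref{eq:id3} make the law of $\tilde M_{t}$ absolutely continuous on $M(2,\R)$ with density positive near $I_{2}$, which delivers {\bf (A4)}--{\bf (A5)}. Your write-up supplies more detail than the paper (which dispatches each condition in one sentence), including an explicit convexity argument for the existence of $\alpha$ and integrability of $\tilde M_{t}^{-1}$ that are not strictly needed since the theorem handles the former via {\bf (A7)} and the latter is not part of {\bf (A1)} as stated; these are harmless extras rather than errors.
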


The proposition implies that each marginal of the distribution of $X_0$ is regularly varying of order $\alpha$. By Theorem 1.1.(ii) of \cite{Basrak2002Characterization}, we conclude that $X_0$ is multivariate regularly varying whenever $\alpha$ is a non-integer. Moreover, since $X_0$ is symmetric, the multivariate regular variation does also hold if $\alpha$ is an odd integer, see Remark 4.4.17 in \cite{buraczewski2016stochastic}.

The proposition does also apply if $a_1=0$ or $a_4=0$. This can be seen by observing that $\prod_{k=1}^{n}\tilde{M}_{k}$ has a strictly positive density on $M(d,\mathbb{R})$ for $n$ sufficiently large, which is sufficient for establishing conditions {\bf (A4)}-{\bf (A5)}. $\square$
\end{example}

\begin{example}[\textbf{Similarity BEKK-ARCH}, continued] \label{ex:similarity2}
  The Similarity BEKK-ARCH, introduced in Example \ref{ex:similarity}, fits into the setting of \cite{Buraczewski2009}, see also Section 4.4.10 of \cite{buraczewski2016stochastic}. Specifically, using the representation $\tilde{M_t}=a|m_t|\mathrm{sign}(m_t)O$, we have that
   \begin{enumerate}[(i)]
     \item $\E[\log(|m_ta|)]<0$ if $a<\exp\left\{ (1/2)\left[-\psi(1)+\log(2)\right]\right\}$,
     \item $\P(\tilde{M_t}x+Q_t=x)<1$ for any $x\in \mathbb{R}^{d}$, and
     \item $\log(|am_t|)$ has a non-arithmetic distribution.
   \end{enumerate}
   Then, due to Theorem 1.6 of \cite{Buraczewski2009}, we have the following proposition:
\begin{prop}
\label{thm:similarities}Let $X_t$ satisfy \eqref{eq:BEKK1}-\eqref{eq:BEKK2} with $l=1$ such that $A:=A_{1}=aO$, where $a>0$ and $O$ is an orthogonal matrix. If $a<\exp\left\{ (1/2)\left[-\psi(1)+\log(2)\right]\right\} = 1.88736...$, then the process has a unique strictly stationary solution $(X_{t})$ with $X_t$ multivariate regularly varying with index $\alpha>0$ satisfying $E[(|m_t|a)^\alpha]=1$. $\square$
\end{prop}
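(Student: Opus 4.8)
The proof will follow \cite{Buraczewski2009}. The plan is to write the Similarity BEKK-ARCH process in the affine form \eqref{eq:BEKK3}, $X_t=\tilde M_tX_{t-1}+Q_t$, with $\tilde M_t=a m_t O=(a|m_t|)\bigl(\mathrm{sign}(m_t)O\bigr)$ and $Q_t\sim N(0,C)$ independent of $m_t$, and to invoke Theorem~1.6 of \cite{Buraczewski2009} (see also Section~4.4.10 of \cite{buraczewski2016stochastic}), which is tailored to the case where $\tilde M_t$ is almost surely a similarity. The first observation is that $\mathrm{sign}(m_t)O$ is orthogonal with probability one, so $\tilde M_t$ is genuinely a random similarity with dilation $a|m_t|$, and that $\|\prod_{t=1}^n\tilde M_t\|_{\mathrm{op}}=\prod_{t=1}^n(a|m_t|)$ since $\prod_{t=1}^n\tilde M_t=a^n\bigl(\prod_t m_t\bigr)O^n$ with $O^n$ orthogonal. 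Existence and uniqueness of the strictly stationary solution \eqref{eq:stationary_solution} is already available: as recorded in Example~\ref{ex:similarity2}, $\E[\log(a|m_t|)]=\log a-\tfrac12[-\psi(1)+\log 2]$, which is strictly negative exactly when $a<\exp\{\tfrac12[-\psi(1)+\log 2]\}$, i.e. the top Lyapunov exponent is negative; combined with $\E[\log^+\|Q_t\|]<\infty$ this yields the unique stationary solution, and geometric ergodicity follows from Theorem~\ref{thm:geo_erg} together with Proposition~\ref{prop:simple_geo_erg}.

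Next I would verify the remaining hypotheses of Theorem~1.6 of \cite{Buraczewski2009}. \emph{(a) Existence of the index.} Put $h(s):=\E[(a|m_t|)^s]=a^s\E[|m_t|^s]$. Since $m_t\sim N(0,1)$, $h$ is finite, $C^\infty$ and convex on $[0,\infty)$ with $h(0)=1$ and $h'(0)=\E[\log(a|m_t|)]<0$, while $h(s)=a^s\,2^{s/2}\Gamma(\tfrac{s+1}{2})/\sqrt{\pi}\to\infty$ as $s\to\infty$ for every fixed $a>0$; hence there is a unique $\alpha\in(0,\infty)$ with $h(\alpha)=1$, which (by the computation of $\|\prod\tilde M_t\|_{\mathrm{op}}$ above) is precisely the Kesten-type index. \emph{(b) Non-degeneracy} (Example~\ref{ex:similarity2}(ii)): for fixed $x\in\R^d$, $\{\tilde M_tx+Q_t=x\}=\{Q_t=(I_d-\tilde M_t)x\}$, and as $C$ is positive definite $Q_t$ has a Lebesgue density on $\R^d$ and is independent of $\tilde M_t$, so this event has probability zero; in particular $\P(\tilde M_tx+Q_t=x)<1$. \emph{(c) Non-arithmeticity} (Example~\ref{ex:similarity2}(iii)): $\log(a|m_t|)=\log a+\log|m_t|$ is absolutely continuous with support $\R$, hence its law is carried by no lattice. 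The residual moment requirements, of the form $\E[(a|m_t|)^{\alpha+\varepsilon}]<\infty$ for some $\varepsilon>0$ (which is $h(\alpha+\varepsilon)<\infty$) and $\E[\|Q_t\|^\alpha]<\infty$, are immediate from the Gaussianity of $m_t$ and $Q_t$.

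With (a)--(c) in place, Theorem~1.6 of \cite{Buraczewski2009} applies and gives that the stationary law of $X_t$ is multivariate regularly varying in the sense of Definition~\ref{def:MRV} with index $\alpha$, characterised by $\E[\|\tilde M_t\|_{\mathrm{op}}^\alpha]=\E[(a|m_t|)^\alpha]=1$, which is the claim. The only step that is more than bookkeeping is (a), where convexity of $h$ together with $h(s)\to\infty$ is what makes $\alpha$ well defined and finite; once the process is put in the form $\tilde M_t=a m_t O$, matching the remaining hypotheses of \cite{Buraczewski2009} — in particular those guaranteeing that the limiting tail measure is non-null, equivalently $C(u_0)>0$ for some $u_0$ as in \eqref{eqn:rv1} — is routine, so the substance of the proposition is carried by the cited theorem.
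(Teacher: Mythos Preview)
Your proposal is correct and follows essentially the same approach as the paper: both write $\tilde M_t=a|m_t|\,\mathrm{sign}(m_t)O$ as a random similarity, verify the three conditions (negative Lyapunov exponent $\E[\log(a|m_t|)]<0$, non-degeneracy $\P(\tilde M_t x+Q_t=x)<1$, and non-arithmeticity of $\log(a|m_t|)$), and then invoke Theorem~1.6 of \cite{Buraczewski2009}. You supply more detail than the paper does --- in particular the convexity argument for the existence and uniqueness of $\alpha$ and the explicit check of the residual moment conditions --- but the substance and route are the same.
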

\end{example}

In the following example, we clarify that the Diagonal BEKK-ARCH process, introduced in Example \ref{ex:diagonal}, does not satisfy the conditions of Theorem 1.1 of \cite{alsmeyer2012sre}. Moreover, we argue that the marginals may have different tail indices, which motivates the notion of \emph{vector scaling regular variation}, introduced in the next section.

\begin{example}[\textbf{Diagonal BEKK-ARCH}, continued] \label{ex:diagonal2}
Consider the diagonal BEKK-ARCH process in Example \ref{ex:diagonal}, i.e. \eqref{eq:BEKK1}-\eqref{eq:BEKK2} with $l=1$ such that $A:=A_1$ is diagonal, $m_t:=m_{1t}$, and $M_t : =\tilde{M}_t = m_tA$. For this process, the distribution of $M_{t}$ is too restricted to apply the results by \citet{alsmeyer2012sre}, as in Example \ref{ex:id2}. Specifically, the irreducibility condition {\bf (A4)} in Appendix \ref{sec:Alsmeyer} can be shown not to hold, as clarified next. It holds that
\begin{eqnarray*}
\mathbb{P}\left\{ \Vert x^\intercal \prod_{k=1}^{n}M_{k}\Vert^{-1}\left(x^\intercal \prod_{k=1}^{n}M_{k}\right)\in U\right\}  & = & \mathbb{P}\left\{ |\prod_{k=1}^{n}m_{k}|^{-1}\Vert x^\intercal A^{n}\Vert^{-1}\left(\prod_{k=1}^{n}m_{k}\right)x^\intercal A^{n}\in U\right\} \\
 & = & \mathbb{P}\left\{ \mathrm{sign}\left(\prod_{k=1}^{n}m_{k}\right)\Vert x^\intercal A^{n}\Vert^{-1}x^\intercal A^{n}\in U\right\}.
\end{eqnarray*}
Hence for any $x\in\mathbb{S}^{d-1}$ we can always find a non-empty open $U\subset \mathbb{S}^{d-1}$ such that
\begin{equation}
\max_{n\in\mathbb{N}}\mathbb{P}\left\{ \mathrm{sign}\left(\prod_{k=1}^{n}m_{k}\right)\Vert x^\intercal A^{n}\Vert^{-1}x^\intercal A^{n}\in U\right\} =0.\label{eq:max_P_1}
\end{equation}
As an example, for $d=2$, choose $x=(1,0)^\intercal$. Then
$\Vert x^\intercal A^{n}\Vert^{-1} x^\intercal A^{n}\in\{(-1,0)\}\cup\{(1,0)\}$
for any $n\in\mathbb{N}$. We conclude that condition {\bf (A4)} does not hold for the diagonal BEKK-ARCH process.\

Note that, each element of $X_{t}=(X_{t,1},...,X_{t,d})^{\intercal}$ of the diagonal BEKK-ARCH process can be written as an SRE,
\[
X_{t,i}=A_{ii}m_{t}X_{t-1,i}+Q_{t,i},\qquad t\in Z,\qquad i=1,\ldots,d.
\]
By Theorem 4.1 of \cite{goldie1991}, the stationary solution of the marginal equation exists if and only if $\E[\log(|A_{ii}m_0|)]<0$.
In that case there exists a unique $\alpha_i>0$ such that $\E[|m_0|^{\alpha_i}]=|A_{ii}|^{-{\alpha_i}}$ and
\[
\P(\pm X_{0,i}>x)\sim c_ix^{-\alpha_i}\qquad\mbox{where}\qquad c_i=\frac{\E[|X_{1,i}|^{\alpha_i}-|A_{ii}m_{1 }X_{0,i}|^{\alpha_i}]}{2\alpha_i \E[|A_{ii}m_{1}|^{\alpha_i}\log(|A_{ii}m_{1}|)]}.
\]
Hence each marginal of $X_0$ may in general have different tail indices. More precisely, the tail indices are different if the diagonal elements of $A$, i.e. the $A_{ii}$s, are, and the heaviest marginal tail index  $\alpha_{i_0}$ corresponds to the largest diagonal coefficient $A_{i_0i_0}$. When $i_0$ is unique, i.e. $\alpha_{i_0}<\alpha_{i}$ for all $i=1,...,d$ except $i\neq i_0$, the distribution $X_0$ can   be considered as  multivariate regularly varying with index $\alpha_{i_0}$ and with a limit measure $\mu$ with degenerate marginals $i\neq i_0$.$\square$
\end{example}

\subsection{Vector scaling regularly varying distributions}\label{sec:vs}
The previous Example \ref{ex:diagonal2} shows that the Diagonal BEKK-ARCH process fits into the case where $\alpha(u)$ in \eqref{eqn:rv1} is non-constant. Such cases have not attracted much attention in the existing body of literature. However, recent empirical studies, such as \cite{Matsui2016extremogram}, see also \cite{damek2017}, may suggest that it is more realistic to consider different marginal tail behaviors when modelling multidimensional financial observations. The idea is to use a vector scaling instead of the scaling $\P(\|X\|>x)$ in Definition \ref{def:MRV} that reduced the regular variation properties of the vector $X$ to the regular variation properties of the norm $\|X\|$ only. More precisely, let $(X_t)$ be a stationary process in $\mathbb{R}^d$ and let $x=(x_1,\ldots,x_d)^\intercal\in \mathbb{R}^d$. Denote also $x^{-1}=(x^{-1}_1,\ldots,x_d^{-1})^\intercal$.\\

In our framework, we consider distributions satisfying the following condition:
\begin{description}
\item[Condition M] Each marginal of $X_0$ is regularly varying of order $\alpha_i>0, i=1,...,d$. The slowly varying functions $\ell_i(t)\to c_i>0$ as $t\to \infty$, $i=1,...,d$.
\end{description}
Indeed, the Diagonal BEKK-ARCH process introduced in Example \ref{ex:diagonal2} satisfies Condition {\bf M}. Moreover, any regularly varying distribution satisfying the Kesten property \eqref{eqn:rv1} satisfies Condition {\bf M}. In particular, the ID and Similarity BEKK-ARCH processes, introduced in Examples \ref{ex:id} and \ref{ex:similarity} respectively, satisfy Condition {\bf M}.

We introduce the notion of vector scaling regular variation as the nonstandard regular variation of the book of \cite{resnick:2007} under Condition {\bf M}, extended to  negative components (Resnick, 2007, Sections 6.5.5-6.5.6):
\begin{dfn}\label{def:gen}
The distribution of  the vector $X_0$   is vector scaling regularly varying (VSRV) if and only if it satisfies Condition {\bf M} and it is non-standard regularly varying, i.e. there exists a normalizing sequence $x(t)$ and a Radon measure $\mu$ with  non-null marginals such that
\begin{eqnarray} \label{eq:srv1}
t \P(x(t)^{-1}\odot X_0 \in \cdot) \to \mu(\cdot),\qquad\mbox{vaguely}.
\end{eqnarray}
\end{dfn}
The usual way of analyzing non-standard regularly varying vectors is to consider a componentwise normalization that is standard regularly varying in the sense of Definition \ref{def:MRV}. Specifically, when $X_0=(X_{0,1},...,X_{0,d})^\intercal$ satisfies Definition \ref{def:gen}, $(c_1^{-1}(X_{0,1}/|X_{0,1}|)|X_{0,1}|^{\alpha_1},...,c_d^{-1}(X_{0,d}/|X_{0,d}|)|X_{0,d}|^{\alpha_d})^\intercal$ satisfies Definition \ref{def:MRV} with index one. Throughout we find it helpful to focus on the non-normalized vector $X_0$ in order to preserve the multiplicative structure of the tail chain introduced in Section \ref{sec:tailchain} below, which is used for analyzing the extremal properties of VSRV processes.

In the following proposition we state the VSRV vector $X_0$ has a polar decomposition. In the case where Condition {\bf M} is not satisfied, note that the polar decomposition holds on a transformation of the original process. Under Condition {\bf M}, the natural radius notion is $\|\cdot\|_\alpha$, where
\begin{eqnarray}
  \|x\|_{\alpha}:=\max_{1\le i\le d}c_i^{-1}|x_i|^{\alpha_i}. \label{eq:hom_func}
\end{eqnarray}Notice that the homogeneity of  $ \|\cdot \|_{\alpha}$, due to Condition {\bf M}, will be essential for the proof.
\begin{prop}\label{prop:srv}
Suppose that the vector $X_0$ satisfies Condition {\bf M}. Then $X_0$ is VSRV if and only if  there exists a tail vector $Y_0\in \R^d$ with non-degenerate marginals such that  \begin{equation}\label{eq:srv2}
\mathcal L(((c_it)^{-1/\alpha_i})_{1\le i\le d}  \odot X_{0}\mid \|X_{0}\|_\alpha>t) \to_{t\to \infty}\mathcal L(Y_{0}),
\end{equation}
where $\|\cdot\|_\alpha$ is defined in \eqref{eq:hom_func}.
Moreover, $\|Y_0\|_\alpha$ is standard Pareto distributed.
\end{prop}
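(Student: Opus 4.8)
The plan is to prove the equivalence by relating the componentwise scaling in the definition of VSRV (Definition \ref{def:gen}) to the single-radius conditioning in \eqref{eq:srv2}, exploiting the fact that $\|\cdot\|_\alpha$ as defined in \eqref{eq:hom_func} is homogeneous in the precise sense that $\|((c_it)^{-1/\alpha_i})_{i}\odot x\|_\alpha = t^{-1}\|x\|_\alpha$ for all $t>0$. So the event $\{\|X_0\|_\alpha > t\}$ is exactly the event $\{\|((c_i t)^{-1/\alpha_i})_i \odot X_0\|_\alpha > 1\}$, which rewrites the conditioning on a moving threshold as conditioning on a fixed set for the rescaled vector.

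First I would fix the normalizing sequence: under Condition \textbf{M}, for the forward direction I take $x_i(t) := (c_i t)^{1/\alpha_i}$, so that $\P(|X_{0,i}| > x_i(t)) \sim (x_i(t))^{-\alpha_i}\ell_i(x_i(t)) \sim (c_i t)^{-1}\cdot c_i = t^{-1}$, giving each marginal the standard normalization. With this choice, $x(t)^{-1}\odot X_0 = ((c_i t)^{-1/\alpha_i})_i \odot X_0$, exactly the vector appearing in \eqref{eq:srv2}. Assuming $X_0$ is VSRV, \eqref{eq:srv1} holds with this $x(t)$ and some Radon $\mu$ with non-null marginals. I would then restrict $\mu$ and the convergence to the $\mu$-continuity set $\{\|y\|_\alpha > 1\}$ (which has finite, positive $\mu$-mass because the marginals of $\mu$ are non-null and, by the scaling property $\mu(s\cdot)=s^{-1}\mu(\cdot)$ inherited from \eqref{eq:srv1}, the complement's structure forces $0 < \mu(\{\|y\|_\alpha>1\}) < \infty$), and normalize: the conditional law of $x(t)^{-1}\odot X_0$ given $\|X_0\|_\alpha > t$ converges weakly to $\mu(\cdot \cap \{\|y\|_\alpha>1\})/\mu(\{\|y\|_\alpha>1\})$, which I define to be $\mathcal L(Y_0)$. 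Non-degeneracy of the marginals of $Y_0$ follows from non-nullity of the marginals of $\mu$. That $\|Y_0\|_\alpha$ is standard Pareto is then the one-dimensional computation $\P(\|Y_0\|_\alpha > s) = \mu(\{\|y\|_\alpha > s\})/\mu(\{\|y\|_\alpha>1\}) = s^{-1}$ for $s \ge 1$, using the $(-1)$-homogeneity of $\mu$.

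For the converse, assuming \eqref{eq:srv2} holds with tail vector $Y_0$, I would reconstruct the limit measure $\mu$ by the standard device of writing, for a bounded continuous $f$ supported away from the axes relevant to a given continuity set, $t\,\E[f(x(t)^{-1}\odot X_0)]$ as $t\,\P(\|X_0\|_\alpha > t)\cdot \E[f(x(t)^{-1}\odot X_0)\mid \|X_0\|_\alpha>t]$ plus a term over $\{\|X_0\|_\alpha\le t\}$ that one controls; the prefactor $t\,\P(\|X_0\|_\alpha>t)$ converges to $\mu(\{\|y\|_\alpha>1\})=:\lambda\in(0,\infty)$ because $\P(\|X_0\|_\alpha>t) = \P(\max_i c_i^{-1}|X_{0,i}|^{\alpha_i} > t)$ is, by Condition \textbf{M} and a Bonferroni/inclusion–exclusion bound, asymptotic to $\sum_i \P(|X_{0,i}|^{\alpha_i} > c_i t) \sim d\,t^{-1}$ up to the joint-tail corrections — more carefully, one gets $t\,\P(\|X_0\|_\alpha>t)\to \lambda$ for some finite positive $\lambda$ directly from \eqref{eq:srv2} by testing against $f\equiv 1$ on $\{\|y\|_\alpha>1\}$. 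Then $\mu(\cdot):=\lambda\,\mathcal L(Y_0)$ restricted appropriately, extended to all of $\bar{\mathbb{R}}^d_0$ by the scaling relation (using $(-1)$-homogeneity forced by the Pareto law of $\|Y_0\|_\alpha$ together with the identity of events above), satisfies \eqref{eq:srv1}, and its marginals are non-null because those of $\mathcal L(Y_0)$ are non-degenerate.

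The main obstacle I anticipate is the bookkeeping around vague convergence on $\bar{\mathbb{R}}^d_0$: one must check that the limiting object built from $Y_0$ is genuinely a Radon measure with the right scaling on \emph{all} continuity sets bounded away from the origin, not just on the cones $\{\|y\|_\alpha > s\}$, and that no mass escapes to the coordinate hyperplanes through $0$ or to infinity. The clean way around this is to establish the $(-1)$-homogeneity of the candidate $\mu$ first (from the standard-Pareto property of $\|Y_0\|_\alpha$ and the exact event identity induced by homogeneity of $\|\cdot\|_\alpha$), and then invoke the standard equivalence (as in Resnick, 2007, Sections 6.1, 6.5.5–6.5.6) between convergence of the ratio measures $\mu_x$ and convergence of the conditional laws on a single generating continuity set. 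Verifying that $\{\|y\|_\alpha>1\}$ is a $\mu$-continuity set and that its boundary is null also requires a short argument using absolute continuity-type properties or at least the fact that $\mu(\partial\{\|y\|_\alpha=1\})=0$ follows from the $(-1)$-homogeneity (a $(-1)$-homogeneous measure puts no mass on any sphere $\{\|y\|_\alpha = s\}$).
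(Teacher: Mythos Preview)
Your argument is correct and follows the same overall strategy as the paper: both directions hinge on the homogeneity identity $\|((c_it)^{-1/\alpha_i})_i\odot x\|_\alpha=t^{-1}\|x\|_\alpha$, which turns the conditioning event $\{\|X_0\|_\alpha>t\}$ into a fixed set for the rescaled vector, and both reduce the equivalence to the standard passage between vague convergence of $t\,\P(x(t)^{-1}\odot X_0\in\cdot)$ and weak convergence of the conditional law on a generating continuity set. The forward direction and the Pareto claim are handled the same way (the paper simply cites \cite{dehaan:resnick:1977}, Theorem~4, where you spell out the restriction--normalization and the $(-1)$-homogeneity computation).

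In the converse there is a genuine, if minor, difference. The paper does \emph{not} claim that $t\,\P(\|X_0\|_\alpha>t)$ converges; it only establishes that $\|X_0\|_\alpha$ is regularly varying of index $1$ with some slowly varying $\ell$, and then absorbs $\ell$ into the normalizing sequence via a change of variable $t=h(t')$ with $\ell(t)^{-1}t=t'$, producing a possibly different $x'=x\circ h$ in Definition~\ref{def:gen}. Your route---fixing $x_i(t)=(c_it)^{1/\alpha_i}$ and showing $t\,\P(\|X_0\|_\alpha>t)\to\lambda$---is more direct and exploits Condition~\textbf{M} more fully; the paper's route is slightly more robust in that it would survive without the convergence $\ell_i\to c_i$.

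One point to repair: your stated justification ``testing against $f\equiv 1$ on $\{\|y\|_\alpha>1\}$'' is vacuous, since $\P(\|X_0\|_\alpha>t\mid\|X_0\|_\alpha>t)=1$ identically. The correct way to extract $t\,\P(\|X_0\|_\alpha>t)\to\lambda\in(0,\infty)$ from \eqref{eq:srv2} under Condition~\textbf{M} is to test on a \emph{marginal} set: for any $i$ with $Y_{0,i}$ non-degenerate there is $s>0$ with $\P(|Y_{0,i}|>s)>0$, and \eqref{eq:srv2} gives
\[
\P(|Y_{0,i}|>s)=\lim_{t\to\infty}\frac{\P\big(|X_{0,i}|>s(c_it)^{1/\alpha_i}\big)}{\P(\|X_0\|_\alpha>t)}=\lim_{t\to\infty}\frac{s^{-\alpha_i}t^{-1}}{\P(\|X_0\|_\alpha>t)},
\]
using $\P(|X_{0,i}|>x)\sim c_ix^{-\alpha_i}$; hence $t\,\P(\|X_0\|_\alpha>t)\to s^{-\alpha_i}/\P(|Y_{0,i}|>s)$. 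With this fix your converse goes through.
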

Notice that a similar vector scaling argument has been introduced in \cite{lindskog:2014}.
\begin{proof}
Adapting Theorem 4 of \cite{dehaan:resnick:1977}, the definition of vector scaling regularly varying distribution of $X_0$ in \eqref{eq:srv1} implies \eqref{eq:srv2}.
Conversely, under Condition {\bf M}, we have that $|X_{0,k}|^{\alpha_k}$ is regularly varying of order 1 for all $1\le k\le d$ with slowly varying functions $\ell_i(t)\sim c_i$. Moreover $\|X_0\|_\alpha$  is regularly varying from the weak convergence in \eqref{eq:srv2} applied on the Borel sets $\{\|X_{0}\|_\alpha>ty\}$, $ y\ge 1$. Thus,  $\|X_0\|_\alpha$  is regularly varying of order 1 with slowly varying function $\ell(t) $. One can rewrite \eqref{eq:srv2} as
\[
\ell(t)^{-1}t  \P(x(t)^{-1}\odot X_0 \in \cdot,\|X_0\|_\alpha>t)  \to \P(Y_0\in \cdot).
\]
Using the slowly varying property of $\ell$, we obtain, for any $\epsilon>0$,
\[
\ell(t)^{-1}t  \P(x(t)^{-1}\odot X_0 \in \cdot,\|X_0\|_\alpha>t\epsilon)  \to \epsilon^{-1}\P(Y_0\in \cdot).
\]
Then by marginal homogeneity of $\|\cdot\|_\alpha$,
\[
\ell(t)^{-1}t  \P(x(t)^{-1}\odot X_0 \in \cdot,\|x(t)^{-1}\odot X_0\|_\alpha>\epsilon)  \to \epsilon^{-1}\P(Y_0\in \cdot).
\]
Notice that $\ell(t) t^{-1}>0$ is non-increasing as it is the tail of $\| X_0\|_\alpha$. So there exists a change of variable $t=h(t')$ so that $\ell(t)^{-1}t=t'$ and
\[
t' \P(x(h(t'))^{-1}\odot X_0 \in \cdot,\|x(h(t'))^{-1}\odot X_0\|_\alpha>\epsilon) \to \epsilon^{-1}\P(Y_0\in \cdot).
\]
We obtain the existence of $\mu$ for $x'= x \circ h$ in \eqref{eq:srv1} such that $\mu(\cdot,\|x\|_\alpha>\epsilon)=\P(\cdot)$, which is enough to characterize $\mu$ entirely, choosing $\epsilon>0$ arbitrarily small.
\end{proof}

The spectral properties of VSRV $X_0$ can be expressed  in terms of the tail vector $Y_0$. Notice that for any $u\in \{+1,0,-1\}^d$, there exists $c_+(u)\ge0$ satisfying
\[
\lim_{t\to\infty}\P\left(\max_{1\le i\le d}c_i^{-1}(u_iX_{0,i})_+^{\alpha_i}>t\mid \|X_0\|_\alpha>t\right)=c_+(u).
\]
Consider $c^{-1}\odot( u\odot X_0)_+^\alpha$, where $c^{-1}=(c_1^{-1},\ldots,c_d^{-1})^\intercal$ and for $x\in \mathbb{R}^d$ and $\alpha =(\alpha_1,...,\alpha_d)^\intercal$, $(x)_+^\alpha = ( (x_1)_+^{\alpha_1},...,(x_d)_+^{\alpha_d})^\intercal$.
If $c_+(u)$ is non-null, by a continuous mapping argument, $c^{-1}\odot( u\odot X_0)_+^\alpha$ satisfies
\begin{equation}
\label{eq:convu}
\mathcal L(t^{-1}c^{-1}\odot( u\odot X_{0})_+^\alpha\mid \|(u\odot X_{0})_+ \|_\alpha>t) \to_{t\to \infty}\mathcal L(c_+(u)^{-1}(u\odot Y_{0})_+^\alpha),
\end{equation}
and $c^{-1}\odot (u\odot X_{0})_+^\alpha$ is regularly varying of index 1.  By homogeneity of the limiting measure in the multivariate  regular variation \eqref{eq:mrv}, we may decompose the limit as a product
\[
\frac{\P((\|  (u\odot X_{0})_+ \|_\alpha>ty,c^{-1}\odot (u\odot X_{0})_+^\alpha/\|( u\odot X_{0})_+ \|_\alpha\in \cdot)}{\P( \|( u\odot X_{0})_+ \|_\alpha>t)}\to y^{-\alpha}\P_{\Theta_u}(\cdot),
\]
for any $y\ge 1$.
Such limiting distribution is called a simple max-stable distribution, and $\P_{\Theta_u}$, supported by the positive orthant, is called the spectral measure of $c^{-1}\odot (u\odot X_{0})_+^\alpha$, see \cite{dehaan:resnick:1977} for more details. By identification of the two expressions of the same limit, we obtain the following proposition.
\begin{prop}\label{prop:spec}
With $Y_0$ defined in Proposition \ref{prop:srv}, the distribution of $(u\odot Y_{0})_+^\alpha/\|(u\odot Y_{0})_+\|_\alpha$, if non-degenerate, is the spectral measure of  $c^{-1}\odot ( u\odot X_{0})_+^\alpha\in [0,\infty)^d$. Moreover, it is independent of $\|(u\odot Y_0)_+\|_\alpha$, and $c_+(u)^{-1}\|(u\odot Y_0)_+\|_\alpha$ is standard Pareto distributed.
\end{prop}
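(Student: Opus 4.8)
The plan is to identify, in two complementary ways, the limiting conditional law $\lim_{t\to\infty}\mathcal L\bigl(t^{-1}c^{-1}\odot(u\odot X_0)_+^\alpha\mid\|(u\odot X_0)_+\|_\alpha>t\bigr)$ and to read off the three assertions from the comparison; this is precisely the ``identification of the two expressions of the same limit'' announced just before the statement, and no new tail estimate is needed beyond Proposition \ref{prop:srv} and the regular variation of index $1$ of $c^{-1}\odot(u\odot X_0)_+^\alpha$ already recorded above.

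For the first expression I would use the polar decomposition displayed above the statement: the nonnegative vector $c^{-1}\odot(u\odot X_0)_+^\alpha$ is standard regularly varying of index $1$, and its sup-norm is exactly $\|(u\odot X_0)_+\|_\alpha$, so that, conditionally on $\|(u\odot X_0)_+\|_\alpha>t$, the pair (normalized radius, angle) converges to $(R,\Theta_u)$ with $R$ standard Pareto, $\Theta_u\sim\P_{\Theta_u}$, and $R$ independent of $\Theta_u$; equivalently $t^{-1}c^{-1}\odot(u\odot X_0)_+^\alpha$ converges, under this conditioning, to $R\,\Theta_u$. The second expression is \eqref{eq:convu}, which asserts that the same conditional limit equals $c_+(u)^{-1}(u\odot Y_0)_+^\alpha$; here \eqref{eq:convu} follows from Proposition \ref{prop:srv} by the continuous mapping theorem applied to $x\mapsto(u\odot x)_+^\alpha$ (which sends $((c_it)^{-1/\alpha_i})_i\odot X_0$ to $t^{-1}c^{-1}\odot(u\odot X_0)_+^\alpha$), together with the inclusion $\{\|(u\odot X_0)_+\|_\alpha>t\}\subseteq\{\|X_0\|_\alpha>t\}$ and the limit $\P(\|(u\odot X_0)_+\|_\alpha>t)/\P(\|X_0\|_\alpha>t)\to c_+(u)$, which is the defining relation of $c_+(u)$.

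With both expressions in hand I would equate the two limit laws and apply the polar map $z\mapsto(\|z\|,z/\|z\|)$. This map is a.s.\ continuous with respect to each limit because both $R\,\Theta_u$ and $c_+(u)^{-1}(u\odot Y_0)_+^\alpha$ avoid the origin: the conditioning forces $\|t^{-1}c^{-1}\odot(u\odot X_0)_+^\alpha\|>1$ along the sequence, so by the Portmanteau theorem any weak limit has sup-norm at least $1$ almost surely. The map sends the first limit to $(R,\Theta_u)$ and the second to $\bigl(c_+(u)^{-1}\|(u\odot Y_0)_+\|_\alpha,\ (u\odot Y_0)_+^\alpha/\|(u\odot Y_0)_+\|_\alpha\bigr)$, and uniqueness of the weak limit forces these two pairs to share the same joint law. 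Reading off this single identity gives all three conclusions simultaneously: $(u\odot Y_0)_+^\alpha/\|(u\odot Y_0)_+\|_\alpha$ has law $\P_{\Theta_u}$, it is independent of $c_+(u)^{-1}\|(u\odot Y_0)_+\|_\alpha$, and the latter is standard Pareto; the degenerate cases ($c_+(u)=0$, or the angular law degenerate) are excluded in the statement.

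The only real work is the bookkeeping in the passage to \eqref{eq:convu}: one must move from conditioning on $\{\|X_0\|_\alpha>t\}$ to conditioning on the smaller event $\{\|(u\odot X_0)_+\|_\alpha>t\}$, which requires that the limit $\mathcal L((u\odot Y_0)_+^\alpha)$ put no mass on the boundary $\{\|z\|_\infty=1\}$ (so that $\{\|z\|_\infty>1\}$ is a continuity set and the conditional laws converge weakly) and, more generally, that the spheres and cones appearing are continuity sets of the relevant limiting measures. This, rather than any substantial estimate, is the point I expect to cost the most care.
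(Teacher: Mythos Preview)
Your proposal is correct and follows essentially the same route as the paper: both arguments identify the conditional limit in \eqref{eq:convu} with the polar decomposition $R\,\Theta_u$ coming from the index-$1$ regular variation of $c^{-1}\odot(u\odot X_0)_+^\alpha$, and then read off the Pareto law, the independence, and the spectral identification from this match. The paper's proof is terse (it just invokes ``homogeneity'' twice), whereas you spell out the polar map, the continuous-mapping passage, and the conditioning change explicitly; in particular your remark about boundary continuity sets when passing from $\{\|X_0\|_\alpha>t\}$ to $\{\|(u\odot X_0)_+\|_\alpha>t\}$ is a genuine point the paper glosses over.
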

\begin{proof}
That $c_+(u)^{-1}\|(u\odot Y_0)_+\|_\alpha$ is standard Pareto distributed follows from the convergence in \eqref{eq:convu} associated with the regularly varying property, ensuring the homogeneity of the limiting measure. Then, using again the homogeneity in \eqref{eq:convu}, it follows that $(u\odot Y_{0})_+^\alpha/\|(u\odot Y_{0})_+\|_\alpha$ and $c_+(u)^{-1}\|(u\odot Y_0)_+\|_\alpha$ are independent.
\end{proof}

\begin{example}[\textbf{Diagonal BEKK-ARCH}, continued]\label{ex:conj}
We have not been able to establish the existence of $Y_0$ satisfying \eqref{eq:srv2}, except the case of the scalar BEKK-ARCH where the diagonal elements of $A$ are identical. In this case the process is a special case of the Similarity BEKK-ARCH, see Example \ref{ex:similarity}. Even in this case, the characterization of the spectral distribution is not an easy task because of the diagonality of $A$, ruling out Theorem 1.4 of \cite{Buraczewski2009}. In Section \ref{sec:spectral estimates} in the appendix we have included some estimates of the spectral measure of $X_0$ for the bivariate case. The plots suggest that the tails of the process are indeed dependent.  We emphasize that new multivariate renewal theory should be developed in order to prove that the Diagonal-ARCH model is VSRV. We leave such task for future research.$\square$
\end{example}

\section{Vector-scaling regularly varying time series and their extremal behavior}\label{sec:srvtimeseries}
The existence of the tail vector in Proposition \ref{prop:srv} allows us to extend the asymptotic results of  \cite{perfekt1997} to VSRV vectors taking possibly negative values. In order to do so, we use the notion of tail chain from \cite{Basrak2009RVMultivariate}  adapted to VSRV stationary sequences with eventually different tail indices.

\subsection{Vector scaling regularly varying time series} \label{subsec:vecrv}
We introduce a new notion of multivariate regularly varying time series based on VSRV of $X_t$. \begin{dfn}\label{def:srv1}
The stationary process $(X_t)$ is  VSRV if and only if  there exists a process $(Y_t)_{t\ge0}$, with  non-degenerate marginals for $Y_0$, such that
\begin{align*}
\mathcal L(((c_it)^{-1/\alpha_i})_{1\le i\le d}\odot (X_0,X_1,\ldots,X_k)\mid \|X_0\|_\alpha>t)\to_{t\to \infty} \mathcal L(Y_0,\ldots,Y_k),
\end{align*}
for all $k\ge 0$. The sequence $(Y_t)_{t\ge 0}$ is called the \emph{tail process}.
\end{dfn}
Following \cite{Basrak2009RVMultivariate}, we extend the notion of spectral measure to the one of \emph{spectral processes}  for any VSRV  stationary process:
\begin{dfn}\label{def:srv2}
The VSRV stationary process $(X_t)$ admits the spectral process $(\Theta_t)$ if and only if
\begin{align*}
\mathcal L(\|X_0\|_\alpha^{-1} (X_0,X_1,\ldots,X_k)\mid \|X_0\|_\alpha>t)\to_{t\to \infty} \mathcal L(\Theta_0,\ldots,\Theta_k),
\end{align*}
for all $k\ge 0$.
\end{dfn}
By arguments similar to the ones in the proof of Proposition \ref{prop:spec}, it follows that the VSRV properties also characterize the spectral process of $(c^{-1}\odot (  u\odot X_{t})_+^\alpha)_{t\ge 0}$, with $X_0$ following the stationary distribution, which has the distribution of $((u\odot Y_{t})_+^\alpha/\|(u\odot Y_{0})_+\|_\alpha)_{t\ge 0}$. We have the following proposition.
\begin{prop}
For a VSRV stationary process $(X_t)$, where $Y_0$ has non-degenerate marginals and $\|Y_0\|_\alpha$ is standard Pareto distributed, the spectral process of any non-degenerate  $(c^{-1}\odot ( u\odot X_{t})_+^\alpha)_{t\ge 0}$  is distributed as $((u\odot Y_{t})_+^\alpha/\|(u\odot Y_{0})_+\|_\alpha)_{t\ge 0}$ and independent of $\|(u\odot Y_0)_+\|_\alpha$. Moreover $c_+(u)^{-1}\|(u\odot Y_0)_+\|_\alpha$ is standard Pareto distributed.
\end{prop}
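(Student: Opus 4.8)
The plan is to run, over an arbitrary finite window $\{0,1,\dots,k\}$, the very argument that takes one from the tail vector to the spectral measure in the proof of Proposition \ref{prop:spec}. Of the three assertions to be proved — that the spectral process exists with the stated law, that it is independent of $\|(u\odot Y_0)_+\|_\alpha$, and that $c_+(u)^{-1}\|(u\odot Y_0)_+\|_\alpha$ is standard Pareto — the last is exactly Proposition \ref{prop:spec} and may simply be invoked; only the first two are genuinely new, and both will come out of the homogeneity of a single limit measure.

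First I would transfer Definition \ref{def:srv1} through a continuous-mapping argument. For each $k$ that definition gives $\mathcal L\big(((c_it)^{-1/\alpha_i})_i\odot(X_0,\dots,X_k)\mid\|X_0\|_\alpha>t\big)\to\mathcal L(Y_0,\dots,Y_k)$ weakly. The map $(x_0,\dots,x_k)\mapsto\big((u\odot x_0)_+^\alpha,\dots,(u\odot x_k)_+^\alpha\big)$ is continuous, and by the coordinatewise homogeneity $\big(u_i(c_it)^{-1/\alpha_i}x_i\big)_+^{\alpha_i}=t^{-1}c_i^{-1}(u_ix_i)_+^{\alpha_i}$ it intertwines the two scalings, so applying it yields
\[
\mathcal L\big(t^{-1}(W_0,\dots,W_k)\mid\|X_0\|_\alpha>t\big)\ \to\ \mathcal L\big((u\odot Y_0)_+^\alpha,\dots,(u\odot Y_k)_+^\alpha\big),\qquad t\to\infty ,
\]
with $W_t:=c^{-1}\odot(u\odot X_t)_+^\alpha$; this is the lag-by-lag version of \eqref{eq:convu}. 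Next I would change the conditioning set from $\{\|X_0\|_\alpha>t\}$ to $\{\|(u\odot X_0)_+\|_\alpha>t\}=\{\|W_0\|_\infty>t\}$: the latter sits inside the former and, by the definition of $c_+(u)$, $\P(\|(u\odot X_0)_+\|_\alpha>t)/\P(\|X_0\|_\alpha>t)\to c_+(u)>0$ (non-degeneracy excludes $c_+(u)=0$); dividing through by this ratio and restricting the limit measure to the set where the time-$0$ block is large — a continuity set, since the limiting radius $c_+(u)^{-1}\|(u\odot Y_0)_+\|_\alpha$ is standard Pareto by Proposition \ref{prop:spec} and hence atomless on $(0,\infty)$ — identifies $(W_t)_{t\ge0}$ as a standard (index-one) regularly varying time series and reads off its tail process.

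Finally I would invoke the structural description of the spectral process of a standard regularly varying stationary time series, in the spirit of \cite{Basrak2009RVMultivariate} and as used already in the proof of Proposition \ref{prop:spec}: the spectral process equals the tail process normalized by the modulus of its time-$0$ value, that modulus is standard Pareto, and the two are independent. Feeding in the tail process from the previous step gives the claimed law for the spectral process of $(c^{-1}\odot(u\odot X_t)_+^\alpha)_{t\ge0}$, its independence from $\|(u\odot Y_0)_+\|_\alpha$, and the standard-Pareto property of $c_+(u)^{-1}\|(u\odot Y_0)_+\|_\alpha$ (this last being Proposition \ref{prop:spec}); since $k$ is arbitrary this holds for all finite-dimensional distributions, as Definitions \ref{def:srv1}-\ref{def:srv2} require. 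I expect the only delicate point to be the change-of-conditioning step — checking that no mass escapes when the conditioning event is tightened and that the relevant boundary sets are $\mu$-null — but, just as at lag $0$, this reduces to the standard-Pareto (hence atomless) nature of the limiting radius already established in Proposition \ref{prop:spec}; everything else is a routine lag-by-lag repetition of the stationary case.
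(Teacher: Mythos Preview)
Your proposal is correct and follows precisely the route the paper indicates: the paper does not give a standalone proof of this proposition but merely remarks, just before stating it, that it follows ``by arguments similar to the ones in the proof of Proposition \ref{prop:spec}'', i.e.\ by running the lag-$0$ argument over each finite window $\{0,\dots,k\}$. Your write-up makes this explicit --- the continuous-mapping step yielding the time-series analogue of \eqref{eq:convu}, the change of conditioning via $c_+(u)$, and the homogeneity/Basrak--Segers decomposition into radius and spectral part --- and correctly identifies the only non-routine point (the conditioning change) as handled by the atomlessness of the Pareto limit from Proposition \ref{prop:spec}.
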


\subsection{The tail chain} \label{sec:tailchain}

In the following, we will focus on the dynamics of the tail process $(Y_t)_{t\ge 1}$ in Definition \ref{def:srv1}, given the existence of $Y_0$. We will restrict ourselves to the case where $(X_t)$ is a Markov chain, which implies that $(Y_t)$ is also a Markov chain called the \emph{tail chain}; see  \cite{perfekt1997}.
We have the following proposition.

\begin{prop}
Let $(X_t)$ satisfy \eqref{eq:BEKK1}-\eqref{eq:BEKK2} be a VSRV stationary process. With $\tilde{M}_t$ defined in \eqref{eq:BEKK4}, the tail process $(Y_t)$ admits the multiplicative form
\begin{equation}\label{tailchain}
Y_{t+1}=\tilde M_{t+1}Y_t,\qquad t\ge 0.
\end{equation}
\end{prop}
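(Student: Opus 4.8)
The idea is to derive the multiplicative recursion \eqref{tailchain} directly from the SRE representation \eqref{eq:BEKK3}, $X_{t+1}=\tilde M_{t+1}X_t+Q_{t+1}$, by conditioning on $\{\|X_0\|_\alpha>t\}$ and letting $t\to\infty$. First I would write, for fixed $k\ge0$,
\[
((c_it)^{-1/\alpha_i})_{1\le i\le d}\odot X_{k+1} = \tilde M_{k+1}\Bigl(((c_it)^{-1/\alpha_i})_{1\le i\le d}\odot X_k\Bigr) + ((c_it)^{-1/\alpha_i})_{1\le i\le d}\odot Q_{k+1},
\]
using that $\tilde M_{k+1}$ commutes with the diagonal scaling only up to conjugation — more precisely, if $D_t=\kdiag((c_it)^{-1/\alpha_i})$ then $D_t\odot(\tilde M_{k+1}X_k)=D_t\tilde M_{k+1}D_t^{-1}(D_t\odot X_k)$, and I must check that the tail indices $\alpha_i$ are such that $D_t\tilde M_{k+1}D_t^{-1}$ has a nondegenerate limit; since $\tilde M_{k+1}$ is measurable w.r.t.\ the innovations $(m_{i,k+1})$ which are independent of $X_0,\dots,X_k$, and since geometric ergodicity gives $\E\|X_t\|^s<\infty$, the normalized noise term $D_t\odot Q_{k+1}\to 0$ in probability. (If Condition {\bf M} fails to force all $\alpha_i$ equal, one works instead with the transformed process as flagged after Definition \ref{def:gen}, where the scaling is genuinely diagonal-compatible; I would make this reduction explicit.)

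Next I would invoke the VSRV assumption (Definition \ref{def:srv1}): conditionally on $\{\|X_0\|_\alpha>t\}$, the vector $D_t\odot(X_0,\dots,X_k)$ converges weakly to $(Y_0,\dots,Y_k)$. By the independence of $\tilde M_{k+1}$ from $(X_0,\dots,X_k)$ — hence also asymptotically from the conditioning event, which depends only on $X_0$ — I can apply a joint convergence / Slutsky argument: the pair $\bigl(\tilde M_{k+1},\, D_t\odot(X_0,\dots,X_k)\bigr)$ converges jointly (conditionally) to $\bigl(\tilde M_{k+1}, (Y_0,\dots,Y_k)\bigr)$ with the two coordinates independent, and the noise term vanishes. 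The continuous mapping theorem applied to $(M,y)\mapsto My_k$ then yields $Y_{k+1}=\tilde M_{k+1}Y_k$ in distribution jointly with $(Y_0,\dots,Y_k)$, and since this holds for every $k$ one extracts the almost-sure recursion \eqref{tailchain} with $(\tilde M_t)_{t\ge1}$ i.i.d.\ and independent of $Y_0$, using the Markov structure of the tail chain (Perfekt).

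**Main obstacle.** The delicate point is the asymptotic negligibility and the exact bookkeeping of the componentwise scalings: one must verify that $D_t\odot Q_{k+1}\to0$ and that $D_t\tilde M_{k+1}D_t^{-1}$ stays well-behaved, which is automatic only when all $\alpha_i$ coincide; in the genuinely non-standard case the honest statement requires passing to the standardized vector $(c_i^{-1}\,\mathrm{sign}(X_{0,i})|X_{0,i}|^{\alpha_i})_{1\le i\le d}$, and there the map $X_k\mapsto \tilde M_{k+1}X_k$ is no longer linear in the transformed coordinates, so one should phrase the recursion \eqref{tailchain} on the original scale and simply note that the transformation is a measurable bijection. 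The second subtlety is justifying that conditioning on $\{\|X_0\|_\alpha>t\}$ does not destroy the independence between $\tilde M_{k+1}$ and the past — this is true exactly, not merely asymptotically, since $\tilde M_{k+1}$ is built from innovations indexed by $k+1>0$, so I would state it as a clean lemma and the rest follows by a routine weak-convergence argument.
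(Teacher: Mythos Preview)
Your approach is sound and more hands-on than the paper's, which proceeds by invoking the Markov tail-chain framework of Janssen and Segers (2014): the paper simply observes that the transition kernel of the tail chain is determined by the Markov kernel of $(X_t)$ alone and is therefore unaffected by marginal rescalings, so the VSRV case inherits the multiplicative structure $Y_{t+1}=\tilde M_{t+1}Y_t$ from the standard regularly varying case once Condition~2.2 of Janssen--Segers is checked (which is immediate for an SRE with light-tailed $Q_t$). Your direct Slutsky argument carries out by hand the limit computation that Janssen--Segers package abstractly; it is self-contained and makes the role of the additive noise $Q_{k+1}$ and the exact (not merely asymptotic) independence of $\tilde M_{k+1}$ from $(X_0,\dots,X_k)$ fully explicit, but at the cost of having to track the conjugation $D_t\tilde M_{k+1}D_t^{-1}$. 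You correctly note that this conjugation is harmless precisely in the cases the paper actually treats --- $\tilde M_t$ diagonal (Diagonal BEKK-ARCH), so that it commutes with $D_t$, or all $\alpha_i$ equal (ID and Similarity BEKK-ARCH), so that $D_t$ is essentially scalar --- whereas in a hypothetical mixed case your proposed workaround via standardized coordinates is not yet a complete argument, since the recursion becomes nonlinear there and ``measurable bijection'' alone does not recover the linear form on the original scale. The paper's kernel-invariance viewpoint sidesteps this bookkeeping entirely, at the price of relying on an external reference.
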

\begin{proof}
Following the approach of \cite{janssen:segers:2014}, one first notices that the existence of the kernel of the tail chain does not depend on the marginal distribution. Thus the characterization of the kernel extends automatically from the usual multivariate regular variation setting to the vector scaling regular variation one. It is straightforward to check Condition 2.2  of \cite{janssen:segers:2014}. We conclude that the tail chain has the multiplicative structure in \eqref{tailchain}.
\end{proof}
The tail chain for VSRV process satisfying \eqref{eq:BEKK1}-\eqref{eq:BEKK2} is the same  no matter the values of the marginal tail indices; for the multivariate regularly varying case with common tail indices it coincides with the tail chain of \cite{janssen:segers:2014} under Condition {\bf M}. Notice that we can extend the tail chain $Y_t$ backward in time ($t< 0$) using Corollary 5.1 of \cite{janssen:segers:2014}.

\subsection{Asymptotic behavior of the maxima}
From the previous section, we have that the tail chain $(Y_t)$ quantifies the extremal behavior of $(X_t)$ in \eqref{eq:BEKK1}-\eqref{eq:BEKK2}. Let us consider the asymptotic behavior of the component-wise maxima
\[
\max(X_1,\ldots,X_{n})=\left(\max(X_{1,k},\ldots,X_{n,k})\right)_{1\le k\le d}.
\]
Let $u=(1,\ldots,1)={\bf1}\in \R^d$ and assume that  $c_+({\bf 1})=\lim_{t\to\infty}\P(X_0 \nleqslant x (t)\mid |X_{0}|\nleqslant x (t))$ is positive.
Recall that for  $(X_t)$ $i.i.d.$, the suitably scaled maxima converge to the Fr\' echet distribution; see \cite{dehaan:resnick:1977}, i.e.
 for any $x=\left(x_1,\ldots,x_d\right)^\intercal\in \R_+^d$, defining $u_n(x)$ such that $n\P(X_{0,i}>u_{n,i}(x))\sim x_i^{-1}$, $1\le i \le d$, we have
\[
\P(\max(X_1,\ldots,X_{n})\le u_n(x))\to \exp(-A^\ast(x)),
\]
if and only if $(X_0)_+$ is vector scaling regularly varying.
In such case, due to Condition {\bf M}, we have the expression
\begin{eqnarray}
A^\ast(x)=c_+({\bf 1})\E\left[\frac1{ \|(Y_0)_+\|_\alpha}\max_{1\le i \le d} \frac{ (Y _{0,i})^{\alpha_k}_+}{c_i x_i} \right] . \label{eq:A_star}
\end{eqnarray}
Let us assume the following Condition, slightly stronger than \eqref{eq:lyapunov_cond}:
\begin{equation}\label{eq:condmom}
\text{There exists $p>0$  such that}\lim_{n\to \infty} \E[\|\tilde M_1\cdots \tilde M_n\|^p]^{1/n}<1.
\end{equation}

\begin{thm}\label{th:extr}
Let $X_t$ satisfy \eqref{eq:BEKK1}-\eqref{eq:BEKK2}. With $\tilde{M}_t$ defined in \eqref{eq:BEKK4}, suppose that condition \eqref{eq:condmom} holds.
 Suppose that the stationary distribution is VSRV. Assuming the existence of $Y_0$ in Definition \ref{def:srv1}, we have that
\[
\P(\max(X_m,\ldots,X_{n})\le u_n(x))\to \exp(-A(x)),
\]
where $A(x)$ admits the expression
\begin{eqnarray}
c_+({\bf 1})\E\left[ \max_{1\le i \le d}\frac{  \max_{k\ge 0} \left(\left( \prod_{1\le j\le k}\tilde M_{k-j} Y_0\right)_{i} \right)^{\alpha_k}_+}{ \|(Y_0)_+\|_\alpha  c_ix_i}  - \max_{1\le i \le d}  \frac{  \max_{k\ge 1}\left(\left( \prod_{1\le j\le k}\tilde M_{k-j} Y_0\right)_i\right)^{\alpha_k}_+}{ \|(Y_0)_+\|_\alpha c_i x_i}   \right] . \label{eq:A}
\end{eqnarray}
\end{thm}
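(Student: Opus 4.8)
The plan is to derive the limiting law of the componentwise maxima from the general theory of extremes for regularly varying stationary sequences, specialized to the VSRV setting via the transformation $X_t \mapsto c^{-1}\odot (u\odot X_t)_+^\alpha$ with $u={\bf 1}$. After this transformation the process is standard multivariate regularly varying of index $1$ in the sense of Definition \ref{def:MRV}, and its tail/spectral process is, by the proposition in Section \ref{sec:tailchain} together with Proposition \ref{prop:spec}, the sequence $((Y_t)_+^\alpha/\|(Y_0)_+\|_\alpha)_{t\ge0}$ with $Y_{t+1}=\tilde M_{t+1}Y_t$. The first step is therefore to invoke the point-process / maxima convergence results of \cite{perfekt1997} (or, equivalently, the cluster-functional approach of \cite{Basrak2009RVMultivariate} and \cite{janssen:segers:2014}) for Markov chains, which give that $\P(\max(X_m,\ldots,X_n)\le u_n(x))\to\exp(-A(x))$ with $A(x)$ expressed as an expectation over the tail chain of the increment between the running maximum over $k\ge0$ and over $k\ge1$. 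Concretely, $A(x)=c_+({\bf 1})\,\E\big[\,\theta_0(x)-\theta_1(x)\,\big]$ where $\theta_j(x)=\max_{1\le i\le d}\max_{k\ge j}\big(\big(\prod_{1\le \ell\le k}\tilde M_{k-\ell}\,Y_0\big)_i\big)_+^{\alpha_i}/(\|(Y_0)_+\|_\alpha c_i x_i)$; this is exactly \eqref{eq:A}.

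Two ingredients must be checked to make this rigorous. First, one needs a \emph{mixing / anti-clustering} condition ensuring that the extremal index representation is valid and that blocks decouple; here condition \eqref{eq:condmom}, being strictly stronger than the Lyapunov condition \eqref{eq:lyapunov_cond}, gives geometric ergodicity (hence $\beta$-mixing at geometric rate) of $(X_t)$, which is more than enough for Leadbetter's $D(u_n)$-type conditions, and simultaneously controls the moments $\E[\|\tilde M_1\cdots\tilde M_n\|^p]^{1/n}<1$ so that the infinite products $\prod_{1\le\ell\le k}\tilde M_{k-\ell}Y_0$ converge to $0$ almost surely and the suprema over $k\ge0$ in \eqref{eq:A} are a.s.\ finite and integrable. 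Second, one needs the "vanishing of the tail chain" $\|Y_t\|_\alpha\to0$ a.s.\ as $t\to\infty$ (the $\mathcal A\mathcal C$ condition of \cite{janssen:segers:2014}); this again follows from the negative Lyapunov exponent applied to the multiplicative tail chain $Y_{t+1}=\tilde M_{t+1}Y_t$, since $\frac1t\log\|\prod_{s=1}^t\tilde M_s\|\to\gamma<0$. Granting these, the standard computation identifies the extremal cluster functional with $\theta_0-\theta_1$ evaluated along the forward tail chain, and the constant $c_+({\bf 1})$ arises precisely as the conditional probability $\lim_t\P(X_0\nleqslant x(t)\mid |X_0|\nleqslant x(t))$ that appears in the normalization of $A^\ast$ in \eqref{eq:A_star}.

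The remaining step is bookkeeping: one must pass from the forward tail chain $(Y_t)_{t\ge0}$ to the two-sided one (extending backward in time using Corollary 5.1 of \cite{janssen:segers:2014}), because the cluster maximum in the stationary process "sees" both past and future, but by the time-change / shift invariance of the spectral process the expectation of the increment $\theta_0-\theta_1$ over the forward chain already captures the whole cluster — this is the usual telescoping argument (as in Theorem 4.3 of \cite{Basrak2009RVMultivariate}) that rewrites $\E[\sup_{k\in\Z}(\cdot)-\sup_{k\ne0}(\cdot)]$ in one-sided form, and it is valid here componentwise because $\|\cdot\|_\alpha$ and each coordinate map are $1$-homogeneous after the $\alpha$-power transformation. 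I expect the main obstacle to be the careful justification of interchanging the limit $t\to\infty$ with the suprema over the infinite horizon $k\ge0$ in the cluster functional — that is, establishing uniform integrability of $\theta_0$ (equivalently, that $\E[\sup_{k\ge0}\|\prod_{1\le\ell\le k}\tilde M_{k-\ell}\|^{\,\max_i\alpha_i}]<\infty$), for which one invokes \eqref{eq:condmom} with $p\ge\max_i\alpha_i$ together with a maximal inequality for products of the i.i.d.\ matrices $\tilde M_t$. Everything else is a direct transcription of \cite{perfekt1997} and \cite{janssen:segers:2014} to the VSRV normalization, which is legitimate precisely because, as noted after \eqref{tailchain}, the tail-chain kernel does not depend on the marginal distributions.
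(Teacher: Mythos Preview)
Your overall route---reduce to Perfekt's (1997) framework via the tail chain of \cite{janssen:segers:2014}, check a mixing condition from geometric ergodicity, check an anti-clustering/finite-clustering condition from the contraction of $(\tilde M_t)$, and then read off $A(x)$---is exactly the paper's strategy. The paper applies Theorem 4.5 of \cite{perfekt1997} directly: Condition B2 comes from Condition~2.2 of \cite{janssen:segers:2014}, $D(u_n)$ from geometric ergodicity, and the finite-clustering condition \eqref{eq:fc} from the drift condition $(\mathrm{DC}_p)$ of \cite{Mikosch:Wintenberger2013}. The explicit form \eqref{eq:A} is then obtained by manipulating Perfekt's integral representation of $A(x)$ using the Pareto law of $c_+({\bf 1})^{-1}\|(Y_0)_+\|_\alpha$ and homogeneity; no two-sided extension or telescoping over $k\in\Z$ is needed, so that part of your outline is superfluous.

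There is, however, a genuine error in your proposed justification of the integrability step. You write that one ``invokes \eqref{eq:condmom} with $p\ge\max_i\alpha_i$''. Condition \eqref{eq:condmom} only asserts the \emph{existence} of some $p>0$ with $\lim_n\E[\|\tilde M_1\cdots\tilde M_n\|^p]^{1/n}<1$; it does not let you choose $p$. In fact, for the processes at hand this limit equals $1$ precisely at $p=\min_i\alpha_i$ and exceeds $1$ beyond it (cf.\ the diagonal case in Example~\ref{ex:diagonal2}), so $p\ge\max_i\alpha_i$ is generically \emph{impossible}. The paper's argument goes the other way: it checks the finite-clustering condition via $(\mathrm{DC}_p)$ for some $p<\alpha=\min_i\alpha_i$, exploiting that $(X_t)$ is also standard $\alpha$-regularly varying. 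Once Perfekt's hypotheses are in place, Theorem~4.5 of \cite{perfekt1997} delivers $A(x)$ directly as a finite quantity, so the uniform-integrability problem you flag does not arise as a separate obstacle---it is absorbed into Perfekt's theorem rather than handled by a moment bound on $\sup_{k\ge0}\|\prod_{1\le\ell\le k}\tilde M_{k-\ell}\|^{\max_i\alpha_i}$, which would indeed fail.
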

\begin{proof}
We verify the conditions of Theorem 4.5 of \cite{perfekt1997}. Condition B2 of \cite{perfekt1997} is satisfied under the more tractable Condition 2.2 of  \cite{janssen:segers:2014}. Indeed,  the tail chain depends only on the Markov kernel and one can apply Lemma 2.1 of \cite{janssen:segers:2014}, because it extends immediately to the vector scaling regularly varying setting. Condition $\rm D(u_n)$ of \cite{perfekt1997} holds by geometric ergodicity of the Markov chain for a sequence $u_{n}=C\log n$, with $C>0$ sufficiently large. Lastly, the finite clustering condition,
\begin{equation}\label{eq:fc}
\lim_{m\to \infty}{\limsup}_{n\to\infty}\P[\max(|X_m|,\ldots,|X_{C\log n}|)\nleqslant  u_n(x) \mid |X_0|\nleqslant u_n(x)]=0,
\end{equation}
holds for any $C>0$   using the same reasoning as in the proof of Theorem 4.6 of  \cite{Mikosch:Wintenberger2013}  under the drift condition (DC$_p$) for some $p<\alpha=\min\{\alpha_i:1\le i\le d\}$. As $(X_t)$ is also standard $\alpha$ regularly varying, actually the drift condition holds thanks to Condition \eqref{eq:condmom} on some sufficiently large iterations of the Markov kernel. Finally, as
\eqref{eq:fc} is a special case of Condition $\rm D^{\infty}(c\log n)$ of \cite{perfekt1997}, we obtain the desired result with the characterization given in Theorem 4.5 of \cite{perfekt1997}
\begin{align*}
A(x)=\int_{(0,\infty)^{d}\setminus (0,x) }\P\left( T_j\le x,k\ge 1\mid T_0=y\right)\nu(dy),
\end{align*}
where $(T_k)_{k\ge 0}$ is the tail chain of the standardized Markov chain $(c_i^{-1}(X_{k,i})^{\alpha_i}_+)_{1\le i\le d}$, $k\ge 0$.
As $\mu$  restricted to $(0,\infty)^{d}\setminus (0,1)^d$ is the  distribution of $Y_{0}$,  we assume that $x_i\ge 1$ for all $1\le i\le d$ so that we identify $\nu$ as the distribution of
\[
 (c_i^{-1} (Y_{0,i})^{\alpha_i}_+)_{1\le i\le d}\qquad \mbox{under the constraint}\qquad \max_{1\le i\le d}c_i^{-1}( Y_{0,i})^{\alpha_i}_+ /x_i> 1.
\]
Thus we have
\begin{align*}
A(x)&= \P\left( c_i^{-1}( Y_{k,i})^{\alpha_i}_+/x_i\le 1,k\ge 1, 1\le i\le d, \max_{1\le i\le d}c_i^{-1}( Y_{0,i})^{\alpha_i}_+ /x_i> 1\right).
\end{align*}
To  obtain an expression that is valid for any $x_i>0$, we  exploit the homogeneity property, and we obtain
\begin{align*}
A(x)
& =\P\left( \max_{k\ge 0}\max_{1\le i\le d}(c_i x_i)^{-1} Y_{k,i} ^{\alpha_i} > 1\right)- \P\left( \max_{k\ge 1}\max_{1\le i\le d}(c_i x_i)^{-1} Y_{k,i}^{\alpha_i}  >  1\right)\\
& = c_+({\bf 1})\E\left[ \frac{\max_{k\ge 0}\max_{1\le i\le d}(c_i x_i)^{-1} ( Y_{k,i})_+ ^{\alpha_i}  }{\|(Y_{0})_+\|_\alpha }- \frac{\max_{k\ge 1}\max_{1\le i\le d}(c_i x_i)^{-1}  (Y_{k,i})_+ ^{\alpha_i}  }{\|(Y_{0})_+\|_\alpha }  \right]
\end{align*}
because  $c_+({\bf 1})^{-1}\|(Y_{0 })_+\|_\alpha$ is standard Pareto distributed and independent of the spectral process $(Y_k)_+^\alpha/\|(Y_{0 })_+\|_\alpha$. This expression is homogeneous and extends to any possible $x$ by homogeneity.  \end{proof}
\subsection{Extremal indices}
As the random coefficients $\tilde M_t$ in \eqref{eq:BEKK4}  may be large, consecutive values of $X_t$ can be large. In the univariate case, one  says that the extremal values appear in clusters. An indicator of the average length of the cluster is the inverse of the extremal index, an indicator of extremal dependence; see \cite{leadbetter:lindgren:rootzen:1983}. \\
Thus, the natural extension of the extremal index is the function
$\theta(x)=A(x)/A^\ast(x)$, with $A^\ast(x)$ and $A(x)$ defined in \eqref{eq:A_star} and \eqref{eq:A}, respectively.
Notice that there is no reason why $\theta$ should not depend on $x$. When $x_i\ge c_+({\bf 1})$, for $1\le i\le d$, we have the more explicit expression in terms of the spectral process,
\begin{align}\label{eq:extremal1}
\theta(x) = \P\left(  Y_{k,i} ^{\alpha_i}  \le c_i x_i,k\ge 1, 1\le i\le d\mid   Y_{0,i} ^{\alpha_i}   > c_i x_i, 1\le i\le d \right).
\end{align}
However, the extremal index $\theta_i$ of the marginal index $(X_{t,i})$ is still well-defined. It depends on the complete dependence structure of the multivariate Markov chain thanks to the following proposition:
\begin{prop}\label{prop:ei}
Let $X_t$ satisfy \eqref{eq:BEKK1}-\eqref{eq:BEKK2}. With $\tilde{M}_t$ defined in \eqref{eq:BEKK4} satisfying \eqref{eq:condmom} and assuming the existence of $Y_0$ in Definition \ref{def:srv1},  the extremal index, $\theta$, defined in \eqref{eq:extremal1}, is a positive continuous function bounded from above by $1$ that can be extended to $(0,\infty]^d\setminus\{\infty,\ldots,\infty\}$. The extremal indices of the marginals are
\begin{align*}
\theta_i&=\theta(\infty,\ldots,\infty,x_i,\infty,\ldots,\infty)\\
&= \frac{\E\left[   \|(Y_{0})_+\|_\alpha^{-1} \left(\max_{k\ge 0}  \left(\left( \prod_{1\le j\le k}\tilde M_{k-j} Y_0\right)_{i}\right)^{\alpha_i}_+   - \max_{k\ge 1}  \left(\left( \prod_{1\le j\le k}\tilde M_{k-j} Y_0\right)_i\right)^{\alpha_i}_+ \right)\right]}{\E\left[  \|(Y_{0})_+\|_\alpha^{-1} \left(Y_{0,i}\right)^{\alpha_i}_+    \right]}.
\end{align*}
\end{prop}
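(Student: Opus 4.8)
The plan is to deduce every claim from the two representations $\theta(x)=A(x)/A^\ast(x)$, with $A^\ast$ as in \eqref{eq:A_star} and $A$ as in \eqref{eq:A} of Theorem~\ref{th:extr}. Throughout I will use that \eqref{eq:condmom} makes the tail chain \eqref{tailchain} decay geometrically, $\|Y_k\|\le\|\tilde M_k\cdots\tilde M_1\|\,\|Y_0\|\to 0$ almost surely, so that the suprema over $k\ge 0$ in $A$ are attained, and that $(\tilde M_t)_{t\ge 1}$ is i.i.d.\ and independent of $Y_0$. For brevity write $a_k(x)=\max_{1\le i\le d}(c_ix_i)^{-1}\bigl((\prod_{1\le j\le k}\tilde M_{k-j}Y_0)_i\bigr)_+^{\alpha_i}$, a nonnegative sequence with $a_k(x)\to 0$ a.s.; commuting the maxima in \eqref{eq:A} and \eqref{eq:A_star} gives $A(x)=c_+({\bf 1})\,\E[\|(Y_0)_+\|_\alpha^{-1}(\max_{k\ge 0}a_k(x)-\max_{k\ge 1}a_k(x))]$ and $A^\ast(x)=c_+({\bf 1})\,\E[\|(Y_0)_+\|_\alpha^{-1}a_0(x)]$.

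The bound $\theta(x)\le 1$ rests on the elementary pointwise inequality $\max_{k\ge 0}a_k-\max_{k\ge 1}a_k\le a_0$, valid for any nonnegative sequence tending to $0$ (split into the cases $a_0\ge\max_{k\ge 1}a_k$ and $a_0<\max_{k\ge 1}a_k$); inserting it into the expectation defining $A(x)$ gives $A(x)\le A^\ast(x)$. (When every $x_i\ge c_+({\bf 1})$ one may instead read $\theta(x)\le 1$ directly off \eqref{eq:extremal1}, which presents it as a conditional probability.) That $A^\ast(x)>0$ is clear, since $c_+({\bf 1})>0$ and the integrand is nonnegative and strictly positive on a set of positive probability. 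For $A(x)>0$ I would show $\P(a_0(x)>\max_{k\ge 1}a_k(x))>0$: first, $a_0(x)\ge\|(Y_0)_+\|_\alpha/\max_i x_i$ and $c_+({\bf 1})^{-1}\|(Y_0)_+\|_\alpha$ is standard Pareto, so $a_0(x)>1$ on a set of positive probability, which stays positive after intersecting with $\{\|Y_0\|\le R\}$ for $R$ large. Second, since $\tilde M_1$ is a fixed linear combination of independent Gaussians, $\P(\|\tilde M_1\|<\varepsilon)>0$ for every $\varepsilon>0$, and \eqref{eq:condmom} makes $S:=\sup_{k\ge 1}\|\tilde M_k\cdots\tilde M_1\|$ finite a.s.; the decomposition $S\le\|\tilde M_1\|(1\vee S')$, with $S'$ independent of $\tilde M_1$ and equal in law to $S$, then yields $\P(S<\delta)>0$ for all $\delta>0$. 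On the intersection of $\{a_0(x)>1,\ \|Y_0\|\le R\}$ with the independent event $\{S<\delta\}$ one has $\|Y_k\|\le\delta R$ for all $k\ge 1$, so $\max_{k\ge 1}a_k(x)$ is bounded by a quantity vanishing with $\delta$; choosing $\delta$ small makes it $<1<a_0(x)$. Hence $A(x)>0$ and $\theta(x)>0$.

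Continuity and the extension to $(0,\infty]^d\setminus\{(\infty,\dots,\infty)\}$ follow by dominated convergence applied to $A$ and $A^\ast$. Using $(Y_{0,i})_+^{\alpha_i}\le c_i\|(Y_0)_+\|_\alpha$, the integrand of $A^\ast$ is bounded by $(\min_i x_i)^{-1}$ and is jointly continuous in $x$ on $(0,\infty]^d$ under the convention $(c_i\infty)^{-1}=0$, which simply removes the $i$-th coordinate from the maximum; the integrand of $A$ obeys the same bound and is a.s.\ continuous in $x$, since the geometric decay of $\|Y_k\|$ makes each supremum coincide, locally and uniformly over $x$ in a compact set, with a maximum over finitely many indices. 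Therefore $A$ and $A^\ast$ are finite and continuous, $A^\ast$ is strictly positive unless every $x_i=\infty$, and $\theta=A/A^\ast$ inherits continuity, the stated extension, and the bounds $0<\theta(x)\le 1$ established above.

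For the marginal extremal indices I would specialize Theorem~\ref{th:extr} to $x=(\infty,\dots,\infty,x_i,\infty,\dots,\infty)$: with this choice $\{\max(X_m,\dots,X_n)\le u_n(x)\}=\{\max_{m\le t\le n}X_{t,i}\le u_{n,i}(x_i)\}$ and $n\,\P(X_{0,i}>u_{n,i}(x_i))\to x_i^{-1}$, so Theorem~\ref{th:extr} gives $\P(\max_{m\le t\le n}X_{t,i}\le u_{n,i})\to\exp(-A(\infty,\dots,x_i,\dots,\infty))$, whence the classical extremal index of $(X_{t,i})$ equals $x_iA(\infty,\dots,x_i,\dots,\infty)$; the i.i.d.\ statement preceding \eqref{eq:A_star} (whose marginal extremal index is $1$) gives the limit $\exp(-x_i^{-1})$, i.e.\ $A^\ast(\infty,\dots,x_i,\dots,\infty)=x_i^{-1}$, so this extremal index is exactly $\theta(\infty,\dots,x_i,\dots,\infty)$. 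Substituting $x=(\infty,\dots,x_i,\dots,\infty)$ into $A$ and $A^\ast$ and cancelling the common factors $c_+({\bf 1})$ and $c_ix_i$ then produces the displayed ratio. The main obstacle is the positivity $A(x)>0$, which genuinely uses the Gaussian-type law of $\tilde M_t$ together with \eqref{eq:condmom} to force the forward tail chain below the level on an event of positive probability; a secondary point requiring care is the normalization $A^\ast(\infty,\dots,x_i,\dots,\infty)=x_i^{-1}$, i.e.\ the correct bookkeeping of the marginal tail constant $c_i$.
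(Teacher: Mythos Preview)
Your argument is correct and essentially self-contained, but it differs from the paper's proof, which is a two-line appeal to outside sources: everything except positivity is obtained from Proposition~2.5 of \cite{perfekt1997}, and positivity from Corollary~2 of \cite{segers:2005}. You instead work directly with the ratio $\theta(x)=A(x)/A^\ast(x)$ coming out of Theorem~\ref{th:extr}: the bound $\theta\le 1$ via the elementary inequality $\max_{k\ge 0}a_k-\max_{k\ge 1}a_k\le a_0$, continuity and the extension to $(0,\infty]^d\setminus\{(\infty,\dots,\infty)\}$ by dominated convergence (using $(Y_{0,i})_+^{\alpha_i}\le c_i\|(Y_0)_+\|_\alpha$ and the a.s.\ geometric decay of $\|Y_k\|$), and positivity by the explicit small-ball argument $\P(\|\tilde M_1\|<\varepsilon)>0$ together with the decomposition $S\le\|\tilde M_1\|(1\vee S')$. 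The payoff of your route is transparency and independence from the cited results; the payoff of the paper's route is brevity and generality, since Segers' corollary does not require the Gaussian structure of $\tilde M_t$ that your small-ball step uses. One place where you should be slightly more explicit is the passage from Theorem~\ref{th:extr}, stated for $x\in(0,\infty)^d$, to the marginal limit: you write ``specialize to $x=(\infty,\dots,x_i,\dots,\infty)$'', but strictly this needs the standard sandwich $\exp(-A(K,\dots,x_i,\dots,K))\le\liminf_n\cdots\le\limsup_n\cdots\le\exp(-A(K,\dots,x_i,\dots,K))+(d-1)/K$ and then $K\to\infty$ together with the continuity of $A$ that you already established. This is routine, and with it your identification $\theta_i=\theta(\infty,\dots,x_i,\dots,\infty)$ and the displayed ratio follow exactly as you indicate.
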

\begin{proof}
Except for the positivity of the extremal index, the result follows by Proposition 2.5 in \cite{perfekt1997}.
The positivity is ensured by applying Corollary 2 in \cite{segers:2005}.
\end{proof}
\begin{example}[\textbf{Diagonal BEKK-ARCH}, continued]
Suppose that $X_0$ is VSRV as conjectured in Example \ref{ex:conj}. It follows from the tail chain approach of \cite{janssen:segers:2014} that the stationary Markov chain $(X_t)$ is regularly varying. Thanks to the diagonal structure of the matrices $\tilde M_k=A m_k$, one can factorize $\|(Y_0)_+\|_\alpha^{-1}(Y_{0,i})^{\alpha_i}$ in the expression of $\theta_i$ provided in Proposition \ref{prop:ei}. Since $\|(Y_0)_+\|_\alpha^{-1}(Y_{0,i})^{\alpha_i}$ and  $m_k$ are independent for $k\ge1$,  we recover a similar expression as in the remarks after  Theorem 2.1 in \cite{dehaan:1989}:
\begin{align*}
\theta_i = \E\left[ \max_{k\ge 0}  \left(A_{ii}^k\prod_{1\le j\le k} m_{j}\right)^{\alpha_i}_+   - \max_{k\ge 1}  \left(A_{ii}^k \prod_{1\le j\le k} m_{j} \right)^{\alpha_i}_+  \right].
\end{align*}
We did not manage to provide a link between the $\theta_i$ and the extremal index $\theta(x)$ of the (multivariate) stationary solution $(X_t)$ of the Diagonal BEKK-ARCH. Due to the different normalising sequences in the asymptotic extremal result given in  Theorem \ref{th:extr}, the extremal index $\theta(x)$ depends on the constants $c_i,i=1,...,d$. For $x_i^\ast=c_+({\bf 1}) $, $1\le i\le d$, the expression \eqref{eq:extremal1} gets more simple because $c_+({\bf 1})^{-1}\|(Y_0)_+\|_\alpha$ is standard Pareto distributed and supported on $[1,\infty)$:
\[
\theta(x^\ast)= \P\left( A_{ii}^k \prod_{1\le j\le k} m_{j} Y_{0,i} \le (c_i c_+({\bf 1}))^{1/\alpha_i} ,k\ge 1, 1\le i\le d  \right).
\]
One can check  that $\theta(x^\ast)\ge \theta_{i_0}$ where $1\le i_0\le d$ satisfies  $A_{i_0i_0}\ge  A_{ii}$, $1\le i\le d$ so that $i_0$ is the marginal with smallest tail and extremal indices. Thus the inverse of the extremal index of the multidimensional Diagonal BEKK-ARCH is not larger than the largest average length of the marginals clusters. It can be interpreted as the fact that the largest clusters are concentrated along the $i_0$ axis, following the interpretation of the multivariate extremal index given on p$.$ 423 of \cite{beirlant:2006}. $\square$
 \end{example}

\subsection{Convergence of point processes}

Let us consider the vector scaling point process on $\R^d$
\begin{align}\label{eq:Nn}
  N_n(\cdot) = \sum_{t=1}^n \delta_{((c_in)^{-1/\alpha_i})_{1\le i\le d} \odot X_t}(\cdot),\qquad n\ge 0.
\end{align}

We want to characterize the asymptotic distribution of the point process $N_n$ when $n\to \infty$. We refer to  \cite{resnick:2007} for details on the convergence in distribution for random measures. In order to characterize the limit, we adapt the approach of  \cite{davis:1995} to the multivariate VSRV case similar to \cite{davis:1998}. The limit distribution will be a cluster point process admitting the expression
\begin{align}\label{eq:N}
  N(\cdot)=\sum_{j=1}^\infty\sum_{t=1}^\infty\delta_{\big((c_i\Gamma_j)^{-1/\alpha_i}\big)_{1\le i\le d} \odot \; Q_{j,t}}(\cdot),
\end{align}
where $\Gamma_j$, $j=1,2,...$, are arrival times of a standard Poisson process, and $(Q_{j,t})_{t\in \Z}$, $j=1,2,...$, are mutually independent cluster processes. Following \cite{basrak:2016}, we use the back and forth tail chain $(Y_t)$ to describe the cluster process: Consider the process $(Z_t)$, satisfying
\[
\mathcal L\Big((Z_t)_{t\in\Z}\Big)= \mathcal L\Big((Y_t)_{t\in\Z} \mid \sup_{t\le - 1} \|Y_{t}\|_\alpha\le 1\Big),
\]
which is well defined when the anti-clustering condition \eqref{eq:fc} is satisfied. Then we have
\[
\mathcal L\Big((Q_{j,t})_{t\in\Z}\Big)= \mathcal L\Big(L_Z^{-1}(Z_t)_{t\in\Z}    \Big),\qquad j\ge1,
\]
with $L_Z = \sup_{t\in \Z}\|Z_t\|_\alpha$. Notice that the use of the pseudo-norm $\|\cdot\|_\alpha$ and the fact that $\|Y_0\|_\alpha$ is standard Pareto are crucial to mimic the arguments of \cite{basrak:2016}. The limiting distribution of the point process $N_n$ coincides with the one of $N$:
\begin{thm}\label{thm:pointprocess}
 Let $X_t$ satisfy \eqref{eq:BEKK1}-\eqref{eq:BEKK2}. With $\tilde{M}_t$ defined in \eqref{eq:BEKK4}, suppose that \eqref{eq:condmom} holds, and assume that $Y_0$ in Definition \ref{def:srv1} exists. With $N_n$ defined in \eqref{eq:Nn} and $N$ defined in  \eqref{eq:N},
\[
N_n\stackrel{d}{\to} N,\qquad n\to \infty.
\]
\end{thm}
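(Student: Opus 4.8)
The plan is to deduce the statement from the general cluster point process convergence theorem of \cite{basrak:2016} — an adaptation to regularly varying Markov chains of the blocking technique of \cite{davis:1995,davis:1998} — carried out in the vector scaling setting developed in Sections \ref{sec:vs}--\ref{sec:tailchain}. Three ingredients are required: (a) the stationary sequence $(X_t)$ is VSRV with a tail process in the sense of Definition \ref{def:srv1}, together with the back-and-forth tail chain; (b) a mixing/block condition of type $\mathrm D(u_n)$; and (c) the anti-clustering (finite mean cluster) condition \eqref{eq:fc}. Ingredient (a) is assumed in the statement, and the multiplicative structure \eqref{tailchain} of the tail chain together with Corollary 5.1 of \cite{janssen:segers:2014} extends it to $t<0$; since \eqref{eq:condmom} forces $\|\tilde M_1\cdots\tilde M_n\|\to 0$ exponentially fast in $L^p$, the tail chain is transient in both time directions, so the process $(Z_t)$ and the cluster laws $(Q_{j,t})$ entering \eqref{eq:N} are well defined once (c) holds.

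For (c): the finite clustering condition is precisely \eqref{eq:fc}, which was already established in the proof of Theorem \ref{th:extr} under Condition \eqref{eq:condmom}, via the drift condition $(\mathrm{DC}_p)$ of \cite{Mikosch:Wintenberger2013} applied to a sufficiently large iterate of the Markov kernel; in particular $L_Z=\sup_{t\in\Z}\|Z_t\|_\alpha<\infty$ almost surely. For (b): geometric ergodicity of $(X_t)$ (Theorem \ref{thm:geo_erg}, implied by \eqref{eq:condmom}) yields $\beta$-mixing at a geometric rate, so Condition $\mathrm D(u_n)$ of \cite{perfekt1997} holds with block length of order $\log n$, the same choice used in the proof of Theorem \ref{th:extr}. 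Finally, by Proposition \ref{prop:srv} the gauge $\|X_0\|_\alpha$ is regularly varying of index $1$, so it plays the role played by a norm in the standard theory: $n\,\P(\|X_0\|_\alpha>b_n)\to 1$ along $b_n\sim\text{const}\cdot n$, consistent with the componentwise normalization $((c_in)^{-1/\alpha_i})_{1\le i\le d}$ in \eqref{eq:Nn}.

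To produce the limit cleanly I would reduce to the standard regularly varying case. Let $\phi:\R^d\to\R^d$ be the homeomorphism $\phi(v)=(\mathrm{sign}(v_i)|v_i|^{1/\alpha_i})_{1\le i\le d}$, and set $V_{t,i}:=c_i^{-1}\mathrm{sign}(X_{t,i})|X_{t,i}|^{\alpha_i}$, so that $((c_in)^{-1/\alpha_i})_{1\le i\le d}\odot X_t=\phi(n^{-1}\,c\odot V_t)$. Then $(V_t)$ is a geometrically ergodic Markov chain (a componentwise bijection of $(X_t)$), it is standard regularly varying of index $1$ with signed components and tail process $\phi^{-1}(Y_t)$, and $\|\phi(v)\|_\alpha=\max_{1\le i\le d}c_i^{-1}|v_i|$ is an ordinary norm; hence sets bounded away from $0$ in the $\|\cdot\|_\alpha$ picture correspond to sets bounded away from $0$ in a norm for $(V_t)$, and $\phi$ restricted to $\bar{\mathbb{R}}^d_0\setminus\{0\}$ is proper. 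The point process $\sum_{t=1}^n\delta_{n^{-1}V_t}$ converges, by the back-and-forth tail chain theorem of \cite{basrak:2016} for signed standard regularly varying Markov chains (whose hypotheses are exactly (a)--(c) above, transferred to $(V_t)$), to a cluster Poisson process; pushing this convergence forward along the proper continuous map $\phi$ (continuous mapping theorem for vague convergence on $\bar{\mathbb{R}}^d_0$) yields $N_n\stackrel{d}{\to}N$ with $N$ as in \eqref{eq:N}. An alternative, avoiding $\phi$, is to rerun the blocking argument of \cite{davis:1995,davis:1998} directly with the gauge $\|\cdot\|_\alpha$ in place of a norm, which is what \cite{basrak:2016} does in the standard case and which the machinery of Sections \ref{sec:vs}--\ref{sec:tailchain} was built to accommodate verbatim.

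The hard part will be the bookkeeping in the reduction, and in particular identifying the limit. One must check that $\phi$ intertwines the vector scaling $((c_in)^{-1/\alpha_i})_i\odot(\cdot)$ with the scalar scaling $n^{-1}(\cdot)$ up to absorbing the constants $c_i$, that the mixing coefficients and the anti-clustering bound transfer from $(X_t)$ to $(V_t)$ (immediate, $\phi$ being a measurable bijection), and — most delicately — that the cluster process obtained for $(V_t)$, built from the tail chain conditioned on $\sup_{t\le-1}\|Y_t\|_\alpha\le 1$ and normalized by $L_Z$, pushes forward under $\phi$ to exactly the expression \eqref{eq:N}; this last identification rests on $\|Y_0\|_\alpha$ being standard Pareto (Proposition \ref{prop:srv}) and on the homogeneity of $\|\cdot\|_\alpha$ under the scaling $\lambda\mapsto(\lambda^{1/\alpha_i})_{1\le i\le d}\odot(\cdot)$, which is the single structural fact that makes the whole vector scaling apparatus align with the classical theory.
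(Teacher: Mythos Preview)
Your proposal is correct and takes essentially the same route as the paper: transform to the standard index-$1$ regularly varying process $c^{-1}\odot\mathrm{sign}(X_t)\odot|X_t|^\alpha$, apply a cluster point process theorem (the paper invokes Theorem 2.8 of \cite{davis:1998} rather than \cite{basrak:2016}, but either works) after checking mixing via geometric ergodicity and anti-clustering via the argument already used for Theorem \ref{th:extr}, then return to $N_n$ via continuous mapping along $\phi$. One minor slip in your bookkeeping: the factor $c\odot$ in the intertwining identity is spurious, since $\phi(n^{-1}V_t)$ already equals $((c_in)^{-1/\alpha_i})_{1\le i\le d}\odot X_t$.
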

\begin{proof}
Let us denote $sign$ the operator $sign(x)=x/|x|$, $x\in \R$, applied coordinatewise to vectors in $\R^d$.
We apply Theorem 2.8 of \cite{davis:1998} to the transformed process $(c^{-1}\odot sign(X_t)\odot  |X_{t}|^{\alpha})_{t\in \Z}$ which is standard regularly varying of order 1. In order to do so, one has to check that the anti-clustering condition \eqref{eq:fc} is satisfied and that the cluster index of its max-norm is positive. This follows from arguments developed in the proof of Theorem \ref{th:extr}. The mixing condition of \cite{davis:1998} is implied by the geometric ergodicity of $(X_t)$. Thus,  the limiting distribution of the point process $\sum_{t=1}^n \delta_{n^{-1}c^{-1}\odot sign(X_t)\odot  |X_{t}|^{\alpha}}$ coincides with the one of the cluster point process $\sum_{j=1}^\infty\sum_{t=1}^\infty\delta_{\Gamma_j^{-1}\tilde Q_{j,t}}$ for some cluster process $(\tilde Q_{j,t})_{t\in\Z}$. A continuous mapping argument yields the convergence of $N_n$ to $\sum_{j=1}^\infty\sum_{t=1}^\infty\delta_{((c_i\Gamma_j)^{-1/\alpha_i})_{1\le i\le d} \odot \; sign( \tilde Q_{j,t})\odot  |\tilde Q_{j,t}|^\alpha}$. The limiting cluster process coincide with $Q_{j,t}$ in distribution thanks to the definition of VSRV processes.
\end{proof}

\section{Sample covariances} \label{sec:autocov}
In this section, we derive the limiting distribution of the sample covariances for certain BEKK-ARCH processes. Consider the sample covariance matrix,
\begin{eqnarray*}
 \Gamma_{n,X}  = \frac{1}{n}\sum_{t=1}^{n}X_t X_t^\intercal.
\end{eqnarray*}
Let $\kvech(\cdot)$ denote the half-vectorization operator, i.e$.$ for a $d \times d$ matrix $A = [a_{ij}]$,  $\kvech(A)= (a_{11},a_{21},...,a_{d1},a_{22},...,a_{d2},a_{33},...,a_{dd})^\intercal$ $(d(d+1)/2\times 1)$. The derivation of the limiting distribution of the sample covariance matrix relies on using the multidimensional regularly varying properties of the stationary process $(\kvech(X_t X_t^\intercal):t\in \Z)$. Let ${\bf a}_n^{-1}$ denote the normalization matrix,
\[
{\bf a}_n^{-1}=\big(n^{-1/\alpha_i-1/\alpha_j}c_i^{-1/\alpha_i}c_j^{-1/\alpha_j}\big)_{1\le i,j\le d}.
\]
Using Theorem \ref{thm:pointprocess} and adapting the continuous mapping argument of Proposition 3.1 of \cite{davis:1998} yield the following result.
\begin{prop}
 Let $X_t$ satisfy \eqref{eq:BEKK1}-\eqref{eq:BEKK2}. With $\tilde{M}_t$ defined in \eqref{eq:BEKK4} satisfying \eqref{eq:condmom} and assuming the existence of $Y_0$ in Definition \ref{def:srv1}, we have
\[
\sum_{t=1}^n\delta_{\kvech({\bf a}_n^{-1})\odot \kvech(X_t X_t^\intercal)}\stackrel{d}{\to} \sum_{\ell=1}^\infty \sum_{t=1}^\infty \delta_{\kvech ({\bf P}_\ell)  \odot \; \kvech(Q_{\ell,t}Q_{\ell,t}^\intercal)},\qquad n\to \infty,
\]
where
\[
{\bf P}_\ell=\big(\Gamma_\ell^{-1/\alpha_i-1/\alpha_j}c_i^{-1/\alpha_i}c_j^{-1/\alpha_j}\big)_{1\le i,j\le d}.
\]
\end{prop}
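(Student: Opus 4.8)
The plan is to obtain this as a continuous-mapping image of the point-process convergence already established in Theorem \ref{thm:pointprocess}, exactly in the way Proposition 3.1 of \cite{davis:1998} derives the corresponding statement for sample autocovariances. First I would absorb the normalisation into the points: writing $\tilde X_t^{(n)}:=\big((c_in)^{-1/\alpha_i}\big)_{1\le i\le d}\odot X_t$, Theorem \ref{thm:pointprocess} reads $N_n=\sum_{t=1}^n\delta_{\tilde X_t^{(n)}}\stackrel{d}{\to}N$ with $N$ as in \eqref{eq:N}. Then I would record the elementary identity $\kvech({\bf a}_n^{-1})\odot\kvech(X_tX_t^\intercal)=\kvech\big(\tilde X_t^{(n)}(\tilde X_t^{(n)})^\intercal\big)=T(\tilde X_t^{(n)})$, where $T(x):=\kvech(xx^\intercal)$; this holds because $\big(\tilde X_t^{(n)}(\tilde X_t^{(n)})^\intercal\big)_{ij}=n^{-1/\alpha_i-1/\alpha_j}c_i^{-1/\alpha_i}c_j^{-1/\alpha_j}X_{t,i}X_{t,j}=({\bf a}_n^{-1})_{ij}(X_tX_t^\intercal)_{ij}$. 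Hence the point process in the statement is precisely $N_n\circ T^{-1}$, and the whole problem reduces to pushing the convergence $N_n\stackrel{d}{\to}N$ through $T$.

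Next I would check the hypotheses of the continuous-mapping theorem for weak convergence of point measures. The map $T$ is continuous on $\R^d\setminus\{0\}$ and \emph{proper} there: boundedness of $\kvech(xx^\intercal)$ forces boundedness of each diagonal entry $x_i^2$ hence of $x$, while $\|\kvech(xx^\intercal)\|$ bounded away from $0$ forces $|x_ix_j|\ge\epsilon$ for some $i,j$ hence $\|x\|$ bounded away from $0$; so $T^{-1}$ sends sets relatively compact in the punctured cone of half-vectorised positive semi-definite matrices to sets relatively compact in $\bar{\R}^d_0$. Since the atoms of $N$ are the almost surely finite vectors $((c_i\Gamma_\ell)^{-1/\alpha_i})_{1\le i\le d}\odot Q_{\ell,t}$, both $N_n$ and $N$ almost surely charge only the finite part $\R^d\setminus\{0\}$ of $\bar{\R}^d_0$, where $T$ is continuous, so the almost-surely-continuous version of the continuous-mapping theorem applies and yields $N_n\circ T^{-1}\stackrel{d}{\to}N\circ T^{-1}$. (That $T$ identifies $x$ with $-x$ is harmless: it only affects the multiplicities of the limiting measure, which the cluster representation \eqref{eq:N} already permits to exceed one.)

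Finally I would identify $N\circ T^{-1}$ with the claimed limit by the same algebra applied to the atoms of $N$: for $v=((c_i\Gamma_\ell)^{-1/\alpha_i})_i\odot Q_{\ell,t}$ one has $(vv^\intercal)_{ij}=\Gamma_\ell^{-1/\alpha_i-1/\alpha_j}c_i^{-1/\alpha_i}c_j^{-1/\alpha_j}(Q_{\ell,t}Q_{\ell,t}^\intercal)_{ij}=({\bf P}_\ell)_{ij}(Q_{\ell,t}Q_{\ell,t}^\intercal)_{ij}$, so $T(v)=\kvech({\bf P}_\ell)\odot\kvech(Q_{\ell,t}Q_{\ell,t}^\intercal)$; summing over $\ell$ and $t$ produces exactly the right-hand side of the proposition.

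The main obstacle I anticipate is not the algebra but the careful handling of the compactification in the continuous-mapping step: $T$ is \emph{not} continuous on the infinite boundary of $\bar{\R}^d_0$ (for instance near $(\infty,0,\dots,0)$ the off-diagonal products are indeterminate), so one must lean on the fact that the limiting point process almost surely avoids that boundary, together with the properness of $T$ on $\R^d\setminus\{0\}$, to legitimately invoke the theorem; the remaining verifications amount to bookkeeping of the normalising constants $c_i$ and the exponents $1/\alpha_i$.
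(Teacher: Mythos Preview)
Your proposal is correct and follows exactly the approach the paper takes: the paper's entire proof is the one-line remark ``Using Theorem~\ref{thm:pointprocess} and adapting the continuous mapping argument of Proposition~3.1 of \cite{davis:1998} yield the following result,'' and you have simply spelled out that continuous-mapping argument in detail, including the key algebraic identity $\kvech({\bf a}_n^{-1})\odot\kvech(X_tX_t^\intercal)=T(\tilde X_t^{(n)})$ and the properness of $T(x)=\kvech(xx^\intercal)$ needed to push $N_n\stackrel{d}{\to}N$ through to the image. Your caveat about the boundary at infinity is the right technical point to flag, and your handling of it (the limit $N$ a.s.\ avoids the infinite boundary) is the standard resolution.
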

Let us define $\alpha_{i,j}=\alpha_i\alpha_j/(\alpha_i+\alpha_j)$ and assume that $\alpha_{i,j}\neq 1$ and $\alpha_{i,j}\neq 2$ for all $1\le i\le j\le d$. Note that $\alpha_{i,j}$ is a candidate for the tail index of the cross product $X_{t,i}X_{t,j}$ and that $\alpha_{i,i}=\alpha_i/2$, $1\le i\le d$. Actually it is the case under some extra assumptions ensuring that the product $Y_{0,i}Y_{0,j}$ is non null, see Proposition 7.6 of \cite{resnick:2007}. In line with Theorem 3.5 of \cite{davis:1998}, we then get our main result on the asymptotic behavior of the empirical covariance matrix
\begin{thm}\label{th:empcov}
 Let $X_t$ satisfy \eqref{eq:BEKK1}-\eqref{eq:BEKK2}. With $\tilde{M}_t$ defined in \eqref{eq:BEKK4}, suppose that \eqref{eq:condmom} holds, and assume that $Y_0$ in Definition \ref{def:srv1} exists. Moreover, for any $(i,j)$ such that $1<\alpha_{i,j}<2$, suppose that
\begin{equation}\label{cond:var}
\lim_{\varepsilon\to 0}\lim\sup_{n\to \infty}\var\big(n^{-1/\alpha_{i,j}}\sum_{t=1}^nX_{t,i}X_{t,j}\mathbf{1}_{|X_{t,i}X_{t,j}|\le n^{1/\alpha_{i,j}}\varepsilon}\big)=0.
\end{equation}
Then
\[
\Big( \sqrt n \wedge n^{1-1/\alpha_{i,j}}  (\Gamma_{n,X}-\E[\Gamma_{n,X}]\mathbf{1}_{\alpha_{i,j}>1})_{i,j}\Big)_{1\le j \le i\le d} \stackrel{d}{\to} S,\qquad n\to \infty,
\]
where $S_{i,j}$ is an $\alpha_{i,j}\wedge 2$-stable random variable for  $1\le i\le j\le d$ and non-degenerate for $i=j$.
\end{thm}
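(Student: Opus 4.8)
\textbf{Proof proposal for Theorem \ref{th:empcov}.}

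The plan is to derive the limit law of the sample covariance matrix from the point-process convergence in Theorem \ref{thm:pointprocess} by a careful continuous-mapping/truncation argument, treating separately the three regimes for the exponent $\alpha_{i,j}$ governing the $(i,j)$ entry. First I would note that by Theorem \ref{thm:pointprocess} and the Proposition preceding this theorem, the point process $\sum_{t=1}^n\delta_{\kvech({\bf a}_n^{-1})\odot\kvech(X_tX_t^\intercal)}$ converges in distribution to the cluster point process $\sum_{\ell\ge1}\sum_{t\ge1}\delta_{\kvech({\bf P}_\ell)\odot\kvech(Q_{\ell,t}Q_{\ell,t}^\intercal)}$. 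Fix a pair $(i,j)$ with $i\le j$. The $(i,j)$ coordinate of the normalised summands is $n^{-1/\alpha_i-1/\alpha_j}c_i^{-1/\alpha_i}c_j^{-1/\alpha_j}X_{t,i}X_{t,j}$, and one checks, as in the discussion before the theorem (via Proposition 7.6 of \cite{resnick:2007} applied to the tail vector $Y_0$, using the extra assumption ensuring $Y_{0,i}Y_{0,j}$ is non-null), that $(X_{t,i}X_{t,j})_{t\in\Z}$ is regularly varying with index $\alpha_{i,j}=\alpha_i\alpha_j/(\alpha_i+\alpha_j)$, and $n^{1/\alpha_{i,j}}=n^{1/\alpha_i+1/\alpha_j}$ is exactly the right scaling for its partial-sum limit. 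The centring constant is $\E[\Gamma_{n,X}]_{i,j}$ when $\alpha_{i,j}>1$ (so that $X_{t,i}X_{t,j}$ is integrable) and $0$ otherwise.

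Next I would split into the three cases exactly as in Theorem 3.5 of \cite{davis:1998}. When $0<\alpha_{i,j}<1$: the normalisation $n^{1-1/\alpha_{i,j}}\wedge\sqrt n=n^{1-1/\alpha_{i,j}}$ and one applies the summation functional $\mu\mapsto\sum x$ to the converging point processes directly; since the functional $(x_t)\mapsto\sum_t x_t$ is a.s.\ continuous at the limiting configuration (no truncation needed, the sum over each cluster converges absolutely and the clusters are summable because $\sum_\ell\Gamma_\ell^{-1/\alpha_{i,j}}<\infty$ a.s.\ for $\alpha_{i,j}<1$), the continuous mapping theorem for point processes (Resnick 2007, Section 7) yields convergence to $S_{i,j}=\sum_\ell\sum_t{\bf P}_{\ell,(i,j)}Q_{\ell,t,i}Q_{\ell,t,j}$, which is an $\alpha_{i,j}$-stable random variable. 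When $1<\alpha_{i,j}<2$: here $n^{1-1/\alpha_{i,j}}\wedge\sqrt n=n^{1-1/\alpha_{i,j}}$ as well, the centring $\E[\Gamma_{n,X}]$ is active, and the subtle point is that the summation functional is no longer continuous because the centred sum is only conditionally convergent; this is precisely where the truncation hypothesis \eqref{cond:var} enters. One truncates at level $n^{1/\alpha_{i,j}}\varepsilon$, applies continuous mapping to the truncated sums, lets $n\to\infty$ and then $\varepsilon\to0$, and uses \eqref{cond:var} together with a Markov/Chebyshev bound to show the truncation error is asymptotically negligible; the limit is again an $\alpha_{i,j}$-stable law. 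When $\alpha_{i,j}>2$ (in particular the diagonal entries with $\alpha_i>4$, and more generally any entry with finite variance of $X_{t,i}X_{t,j}$): here $\sqrt n$ dominates, the CLT regime applies, and one invokes a CLT for functionals of geometrically ergodic Markov chains (e.g.\ via the mixing/drift conditions already verified for Theorem \ref{thm:pointprocess}, or Theorem 3.5 of \cite{davis:1998} directly) to get a Gaussian limit, i.e.\ a $2$-stable $S_{i,j}$; non-degeneracy on the diagonal follows since $\var(X_{t,i}^2)>0$.

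Finally I would assemble the joint convergence: the vector $\big(\sqrt n\wedge n^{1-1/\alpha_{i,j}}(\Gamma_{n,X}-\E[\Gamma_{n,X}]\mathbf 1_{\alpha_{i,j}>1})_{i,j}\big)_{1\le j\le i\le d}$ is a continuous (after truncation, in the relevant coordinates) image of the single converging point process $\sum_t\delta_{\kvech({\bf a}_n^{-1})\odot\kvech(X_tX_t^\intercal)}$, so the joint limit $S$ is obtained by applying the vector-valued summation functional to the limiting cluster process $\sum_\ell\sum_t\delta_{\kvech({\bf P}_\ell)\odot\kvech(Q_{\ell,t}Q_{\ell,t}^\intercal)}$; each coordinate $S_{i,j}$ is then $\alpha_{i,j}\wedge2$-stable as described, and the dependence across $(i,j)$ is inherited from the common cluster structure. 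The main obstacle is the case $1<\alpha_{i,j}<2$: establishing that the centred, truncated partial sums converge and that the truncation error vanishes requires the technical variance condition \eqref{cond:var} and a uniform-integrability-type argument to pass from the truncated continuous-mapping limit to the full limit — this is the step where the point-process machinery does not apply off the shelf and one must mimic the diagonal-argument of Theorem 3.5 in \cite{davis:1998}, checking that the stationary sequence $(X_{t,i}X_{t,j})$ satisfies the requisite mixing and anticlustering bounds (which follow from geometric ergodicity and the drift condition implied by \eqref{eq:condmom}).
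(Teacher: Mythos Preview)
Your proposal is correct and follows essentially the same route as the paper: the paper does not give a standalone proof but states the result ``in line with Theorem 3.5 of \cite{davis:1998}'' after establishing the point-process convergence for $\kvech(X_tX_t^\intercal)$ in the preceding Proposition, and your sketch is precisely an unpacking of how that theorem applies --- point-process convergence plus a continuous-mapping/truncation argument split into the three regimes $\alpha_{i,j}<1$, $1<\alpha_{i,j}<2$ (where \eqref{cond:var} controls the truncation error), and $\alpha_{i,j}>2$ (CLT via geometric ergodicity). Your identification of the joint limit through the single cluster point process, and of the role of \eqref{cond:var} as exactly the condition needed to run the diagonal argument of \cite{davis:1998} in the intermediate regime, matches the paper's intended argument.
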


When Theorem \ref{th:empcov} applies, as $\alpha_{i,j}\ge (\alpha_i \wedge \alpha_j)/2$, the widest confidence interval on the covariance estimates is supported by the $i_0$th marginal satisfying $\alpha_{i_0}\le \alpha_i$ for all $1\le i\le d$.

In order to apply Theorem \ref{th:empcov}, the main difficulty is to show that the condition \eqref{cond:var} holds. However, notice that  Theorem \ref{th:empcov} applies simultaneously on the cross-products with $\alpha_{i,j}\notin [1,2]$ with no extra assumption. Next, we apply Theorem \ref{th:empcov} to the ongoing examples.

\begin{example}[\textbf{Diagonal BEKK-ARCH}, continued] \label{ex:covdiag}
Consider the diagonal BEKK-ARCH process and the cross products $X_{t,i}X_{t,j}$ for some $i\le j$ and any $t\in \Z$.  From Hölder's inequality (which turns out to be an equality in our case), we have
\[
\E[|A_{ii}A_{jj}m_0^2|^{\alpha_{i,j}}]=\E[|A_{ii}m_0|^{\alpha_i}|]^{\alpha_{i,j}/\alpha_{i}}\E[|A_{jj}m_0|^{\alpha_{j}}]^{\alpha_{i,j}/\alpha_{j}}=1.
\]
Thus,  $(X_{t,i}X_{t,j})$, which is a function of the Markov chain $(X_t)$, satisfies the drift condition (DC$_p$) of \cite{Mikosch:Wintenberger2013} for all $p<\alpha_{i,j}$. Then, one can  show that \eqref{cond:var} is satisfied using the same reasoning as in the proof of Theorem 4.6 of  \cite{Mikosch:Wintenberger2013}. $\square$
\end{example}

\begin{example}[\textbf{Similarity BEKK-ARCH}, continued]\label{ex:covsimilarity}
  If $\alpha_{i,j}\notin [1,2]$, the limiting distribution of the sample covariance matrix for the Similarity BEKK-ARCH follows directly from Theorem \ref{th:empcov}. If $\alpha_{i,j}\in (1,2)$ the additional condition \eqref{cond:var} has to be checked. Relying on the same arguments as in Example \ref{ex:covdiag}, one would have to verify that the condition (DC$_p$) of \cite{Mikosch:Wintenberger2013} holds for the Similarity BEKK-ARCH process, which appears a difficult task as it requires to find a suitable multivariate Lyapunov function. We leave such task for future investigation. Consider the special case of the scalar BEKK-ARCH process introduced in Example \ref{ex:similarity}. Here $A=\sqrt{a}I_{d}$, with $I_{d}$ the identity matrix, such that $\tilde M_t$ is diagonal. In the case $\alpha_{i,j}\in (1,2)$ for a least some pair $(i,j)$, the limiting distribution of the sample covariance is derived along the lines of Example \ref{ex:covdiag}. Specifically, this relies on assuming that $a<\exp\left\{ (1/2)\left[-\psi(1)+\log(2)\right]\right\}$ such that a stationary solution exists, and noting that the index of regular variation for each marginal of $X_t$ is given by $\alpha$ satisfying $\E[|\sqrt{a}m_t|^{\alpha}]=1$. $\square$
\end{example}

\begin{example}[\textbf{ID BEKK-ARCH}, continued]
  Whenever $\alpha_{i,j}\notin [1,2]$, the limiting distribution of the sample covariance matrix for the ID BEKK-ARCH follows directly from Theorem \ref{th:empcov}. Similar to Example \ref{ex:covsimilarity} we leave for future investigation to show whether condition \eqref{cond:var} holds. $\square$
\end{example}

The previous examples are important in relation to variance targeting estimation of the BEKK-ARCH model, as considered in \cite{Pedersen2013VTBEKK}. For the univariate GARCH process, \cite{vaynman2013stableGARCH} have shown that the limiting distribution of the (suitably scaled) variance targeting estimator follows a singular stable distribution when the tail index of the process lies in $(2,4)$. We expect a similar result to hold for the BEKK-ARCH process.
%
%
%
%
%
%
\section{Concluding remarks} \label{sec:conclusion}

We have found a mild sufficient condition for geometric ergodicity of a class of BEKK-ARCH processes. By exploiting the the processes can be written as a multivaraite stochastic recurrence equation (SRE), we have investigated the tail behavior of the invariant distribution for different BEKK-ARCH processes. Specifically, we have demonstrated that existing Kesten-type results apply in certain cases, implying that each marginal of the invariant distribution has the same tail index. Moreover, we have shown for certain empirically relevant processes, existing renewal theory is not applicable. In particular, we show that the Diagonal BEKK-ARCH processes may have component-wise different tail indices. In light of this property, we introduce the notion of vector scaling regular varying (VSRV) distributions and processes. We study the extremal behavior of such processes and provide results for convergence of point processes based on VSRV processes. It is conjectured, and supported by simulations, that the Diagonal BEKK-ARCH process is VSRV. However, it remains an open task to verify formally that the property holds. Such task will require the development of new multivariate renewal theory.

Our results are expected to be important for future research related to the statistical analysis of the Diagonal BEKK-ARCH model. As recently shown by \cite{Avarucci2012ET}, the (suitably scaled) maximum likelihood estimator for the general BEKK-ARCH model (with $l=1$) does only have a Gaussian limiting distribution, if  the second-order moments of $X_t$ is finite. In order to obtain the limiting distribution in the presence of very heavy tails, i.e. when $\E[\Vert X_t \Vert^2]=\infty$, we believe that non-standard arguments are needed, and in particular the knowledge of the tail-behavior is expected to be crucial for the analysis. We leave additional considerations in this direction to future research.

\bibliographystyle{ecta}
\bibliography{BEKKARCHbib-3}

\newpage{}

\appendix

\section{Appendix}

\subsection{Theorem 1.1 of \cite{alsmeyer2012sre}} \label{sec:Alsmeyer}

Consider the general SRE
\begin{equation}\label{eq:SRE_appendix}
  Y_t = A_tY_{t-1} + B_t
\end{equation}
with $(A_t,B_t)$ a sequence of i.i.d. random variables with generic copy $(A,B)$ such that $A$ is a $d \times d$ real matrix and $B$ takes values in $\mathbb{R}^d$. Consider the following conditions of  \cite{alsmeyer2012sre}:
\begin{itemize}
\item {\bf (A1)} $\E[\log^+(\Vert A \Vert)]<\infty$, where $\Vert \cdot \Vert$ denotes the operator norm.
\item {\bf (A2)} $\E[\log^+(\Vert B \Vert)]<\infty$.
\item {\bf (A3)} $\P[A \in GL(d,\mathbb{R})] = 1$.
\item {\bf (A4)} $\max_{n\in\mathbb{N}}\mathbb{P}\left\{ \Vert x^\intercal\prod_{i=1}^{n} A_{i}\Vert^{-1}\left(x^\intercal\prod_{i=1}^{n}A_{i}\right)\in U\right\} >0,$ for any $x\in\mathbb{S}^{d-1}$ and any non-empty open subset $U$ of $\mathbb{S}^{d-1}$.
\item {\bf (A5)} Let $\mathcal{V}_{\delta}$ denote the open $\delta$-ball in $GL(d,\mathbb{R})$ and let $\mathbb{LEB}$ denote the Lebesgue measure on $M(d,\mathbb{R})$. It holds that for any Borel set $A \in M(d,\mathbb{R})$, $\P(\prod_{i=1}^{n_0} A_{i} \in A) \geq \gamma_0 1_{\mathcal{V}_c(\Gamma_0)}(A)\mathbb{LEB}(A)$ for some $\Gamma_0 \in GL(d,\mathbb{R})$, $n_0\in \mathbb{N}$, and $c,\gamma_0>0$.
\item {\bf (A6)} $\P(A_0v+B_0=v)<1$ for any $v\in \mathbb{R}^d$.
\item {\bf (A7)} There exists $\kappa_0>0$ such that
\begin{equation*}
  \E[\inf_{x\in \mathbb{S}^{d-1}}\Vert x^{\intercal} A_0  \Vert^{\kappa_0}]\geq 1, \quad \E[\Vert A_0 \Vert^{\kappa_0}\log^+\Vert A_0 \Vert ]<\infty, \quad \text{and}\quad 0<\E[\Vert B_0 \Vert^{\kappa_0}]<\infty.
\end{equation*}
\end{itemize}

\begin{thm}[{\citet[Theorem 1.1]{alsmeyer2012sre}}] \label{thm_alsmeyermentemeier}
  Consider the SRE in \eqref{eq:SRE_appendix}) suppose that $\beta := \lim_{n\rightarrow \infty}n^{-1}\log (\Vert \prod_{i=1}^{n} A_{i} \Vert )<0 $ and that {\bf (A1)}-{\bf (A7)} hold, then there exists a unique $\kappa \in (0,\kappa_0]$ such that
\begin{equation*}
  \lim_{n\rightarrow \infty}n^{-1}\log (\Vert \prod_{i=1}^{n} A_{i} \Vert^{\kappa} )=0.
\end{equation*}
Moreover, the SRE has a strictly stationary solution satisfying,
\begin{equation*}
  \lim_{t\rightarrow\infty}t^\kappa\P(x^{\intercal}Y_0>t)=K(x)\quad \text{for all $x\in \mathbb{S}^{d-1}$},
\end{equation*}
where $K$ is a finite positive and continuous function on $\mathbb{S}^{d-1}$.
\end{thm}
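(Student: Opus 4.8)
The plan is to follow the spectral/renewal-theoretic route of \cite{Kesten1973randomcoef}, as refined by Guivarc'h--Le~Page and by \cite{alsmeyer2012sre} (see also \cite{buraczewski2016stochastic}), built around the family of transfer operators on the sphere. For $s\ge 0$ set
\begin{equation*}
P_s f(x) = \E\left[\|x^\intercal A\|^{s}\, f\left(\|x^\intercal A\|^{-1}x^\intercal A\right)\right],\qquad x\in\mathbb{S}^{d-1},
\end{equation*}
and let $\Lambda(s) = \lim_{n\to\infty} n^{-1}\log\E[\,\|\prod_{i=1}^n A_i\|^{s}\,]$ be the associated pressure function. First I would record that $\Lambda$ is finite and convex on $[0,\kappa_0]$ by {\bf (A1)} and {\bf (A7)}, with $\Lambda(0)=0$ and $\Lambda'(0)=\beta<0$ by hypothesis, while the first inequality in {\bf (A7)} together with supermultiplicativity of $n\mapsto\E[\inf_{x}\|x^\intercal\prod_{i=1}^n A_i\|^{\kappa_0}]$ forces $\Lambda(\kappa_0)\ge 0$. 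Convexity and the intermediate value theorem then give a unique $\kappa\in(0,\kappa_0]$ with $\Lambda(\kappa)=0$, which is the first assertion; equivalently $k(\kappa)=1$ for $k(s):=e^{\Lambda(s)}$.

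The core of the argument is to establish, under the irreducibility condition {\bf (A4)} and the Lebesgue-density condition {\bf (A5)}, that $P_s$ is quasi-compact on a Banach space of H\"older-continuous functions on $\mathbb{S}^{d-1}$, so that $k(s)$ is a simple isolated dominant eigenvalue with a strictly positive continuous eigenfunction $r_s$ and a dual eigenmeasure $\nu_s$ of full support. Concretely I would first prove a uniform contraction estimate for the projective action of long products $\prod_{i=1}^{n}A_i$ --- using {\bf (A4)} to reach arbitrary open subsets of $\mathbb{S}^{d-1}$ and {\bf (A5)} to smear the law absolutely continuously --- and then deduce a Doeblin--Fortet / Lasota--Yorke inequality for $P_s$ in the H\"older norm, whence a spectral gap by the classical quasi-compactness criterion of Ionescu--Tulcea and Marinescu. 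At $s=\kappa$ the eigen-data $r_\kappa,\nu_\kappa$ yield, via the Choquet--Deny-type argument of \cite{Kesten1973randomcoef}, a nonzero $\kappa$-homogeneous Radon measure on $\R^d\setminus\{0\}$ that is invariant for the SRE; here {\bf (A6)} excludes the degenerate fixed-point solution and {\bf (A2)} guarantees convergence of the series representation of $Y_0$.

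To extract the precise tail, fix $x\in\mathbb{S}^{d-1}$ and perform the $\kappa$-tilted (Cram\'er) change of measure, under which the additive functional $S_n=\log\|x^\intercal\prod_{i=1}^n A_i\|$ becomes a Markov-modulated random walk with positive drift $\Lambda'(\kappa)>0$ --- strict convexity of $\Lambda$ being itself a consequence of {\bf (A5)} --- while the sphere component becomes a stationary chain governed by $\nu_\kappa$. Writing $\P(x^\intercal Y_0>t)$ through the a.s.\ convergent series for $Y_0$ and conditioning on the first passage of $S_n$ above $\log t$, the Markov renewal theorem --- applicable because {\bf (A5)} renders the ladder-height law non-arithmetic --- yields $t^{\kappa}\P(x^\intercal Y_0>t)\to K(x)$, with $K$ an explicit integral against $\nu_\kappa$ of an overshoot functional weighted by $r_\kappa$. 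Continuity of $K$ on $\mathbb{S}^{d-1}$ then follows from continuity of $r_\kappa$ and a locally uniform version of the renewal limit, while $K>0$ everywhere follows from $\supp\nu_\kappa=\mathbb{S}^{d-1}$ together with the positivity argument of \cite{Kesten1973randomcoef}: first $K>0$ $\nu_\kappa$-a.e., then everywhere by a further irreducibility sweep based on {\bf (A4)}.

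\textbf{Main obstacle.} The delicate step is the quasi-compactness of $P_s$ on the H\"older space: establishing the contraction of the projective chain and the Lasota--Yorke inequality is exactly where {\bf (A4)}--{\bf (A5)} must be exploited in full strength --- and is precisely the point where \cite{alsmeyer2012sre} improve on \cite{Kesten1973randomcoef} --- and one must moreover control the estimates uniformly in $s$ so that $s\mapsto k(s)$ is real-analytic and strictly log-convex. A secondary technical hurdle is upgrading ``$K\ge 0$'' to ``$K>0$ on all of $\mathbb{S}^{d-1}$'' and proving continuity of $K$, which requires the full-support property of $\nu_\kappa$ and a uniform renewal estimate rather than the pointwise renewal theorem.
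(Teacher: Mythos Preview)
The paper does not prove this theorem at all: it is stated in the appendix purely as a quotation of Theorem~1.1 of \cite{alsmeyer2012sre}, to be invoked as a black box in Example~\ref{ex:id2}. There is therefore nothing in the paper to compare your proposal against.

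That said, your sketch is a faithful high-level outline of the actual argument in \cite{alsmeyer2012sre}: the transfer-operator family $P_s$ on $\mathbb{S}^{d-1}$, the pressure function $\Lambda$ with $\Lambda(0)=0$, $\Lambda'(0)=\beta<0$ and $\Lambda(\kappa_0)\ge 0$ forcing a unique root $\kappa$, quasi-compactness via a Doeblin--Fortet inequality exploiting {\bf (A4)}--{\bf (A5)}, the $\kappa$-tilted change of measure turning $\log\|x^\intercal\prod A_i\|$ into a positive-drift Markov-additive process, and the Markov renewal theorem to extract the tail constant $K(x)$. You have also correctly identified the two genuinely hard steps --- the spectral gap for $P_s$ and the strict positivity and continuity of $K$ --- as the places where the density condition {\bf (A5)} does real work. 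If the intent was to reproduce the proof of the cited result rather than the paper's use of it, your plan is sound; but for the purposes of this paper no proof is expected here.
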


\newpage
\subsection{Estimation of the spectral measure for the bivariate diagonal BEKK-ARCH process} \label{sec:spectral estimates}
In this section we consider the estimation of the spectral measure of the diagonal BEKK-ARCH process presented in Example \ref{ex:diagonal2}. Specifically, we consider a special case of the BEKK-ARCH process in \eqref{eq:BEKK1}-\eqref{eq:BEKK2}, where $d=2$:
\[
X_{t}=m_{t}AX_{t-1}+Q_{t},
\]
with $\{Q_{t}:t\in \mathbb{N}\}$ an $i.i.d.$ process with $Q_{t}\sim N(0,C)$ independent of $\{m_{t}:t\in \mathbb{N}\}$, and
\[
A=\begin{pmatrix}A_{11} & 0\\
0 & A_{22}
\end{pmatrix}.
\]

Following the approach    for {\it i.i.d.} sequences of vectors given in \cite{einmahl2001}, we consider the following estimator of the spectral measure of $X_t=(X_{t,1},X_{t,2})^\intercal$:
\[
\hat{\Phi}(\theta)=\frac{1}{k}\sum_{t=1}^{T}\mathbf{1}_{\{R_{t}^{(1)}\lor R_{t}^{(2)}\geq T+1-k,\arctan\frac{T+1-R_{t}^{(2)}}{T+1-R_{t}^{(1)}}\leq\theta\}},\quad\theta\in[0,\pi/2],
\]
where $R_{t}^{(j)}$ denotes the rank of $X_{t,j}$ among $X_{1,j},...,X_{T,j}$, $j=1,2$,
i.e.
\[
R_{t}^{(j)}:=\sum_{i=1}^{T}\mathbf{1}_{\{X_{i,j}\geq X_{t,j}\}}.
\]
Here $k$ is a sequence satisfying $k(T)\rightarrow\infty$ and $k(T)=o(T)$. \cite{einmahl2001} showed that this estimator is consistent for i.i.d. series. We expect a similar result to hold for geometrically ergodic processes. The reason is that the asymptotic behavior of the empirical tail process  used in \cite{einmahl2001}  has been extended to such cases in \cite{kulik:2015}.

We consider the estimation of the spectral measure for different values of $C$, $A_{11}$, and $A_{22}$. In particular, the matrix $C$ is
\[
C=10^{-5}\begin{bmatrix}1 & c\\
c & 1
\end{bmatrix},\quad c\in\{0,0.5\},
\]
and the values $A_{11}$ and $A_{22}$ are determined according to choices of the tail indices of $X_{t,1}$ and $X_{t,2}$, respectively. I.e. $A_{11}$ and $A_{22}$ satisfy $\mathbb{E}[|m_t|^{\alpha_{i}}]=|A_{ii}|^{-\alpha_{i}}$ and are determined by analytical integration. Specifically, with $\phi(\cdot)$ the pdf of the standard normal distribution,
\begin{eqnarray*}
\alpha_{i} & = & 0.5\Rightarrow A_{ii}=(\int_{-\infty}^{\infty}\vert m\vert^{0.5}\phi(m)dm)^{-1/0.5}\approx1.479\\
\alpha_{i} & = & 2.0\Rightarrow A_{ii}=1\\
\alpha_{i} & = & 3.0\Rightarrow A_{ii}=(8/\pi)^{-1/6}\approx0.8557\\
\alpha_{i} & = & 4.0\Rightarrow A_{ii}=3^{-1/4}\approx0.7598
\end{eqnarray*}

Figure \ref{fig1} contains plots of the estimates of the spectral measure. The estimates $\hat{\Phi}(\theta)$ are based on one realization of the process with $T=$ 2,000 and a burn-in period of 10,000 observations.

\begin{figure}[!ht]
  \centering
\includegraphics[width=\textwidth,  clip=TRUE]{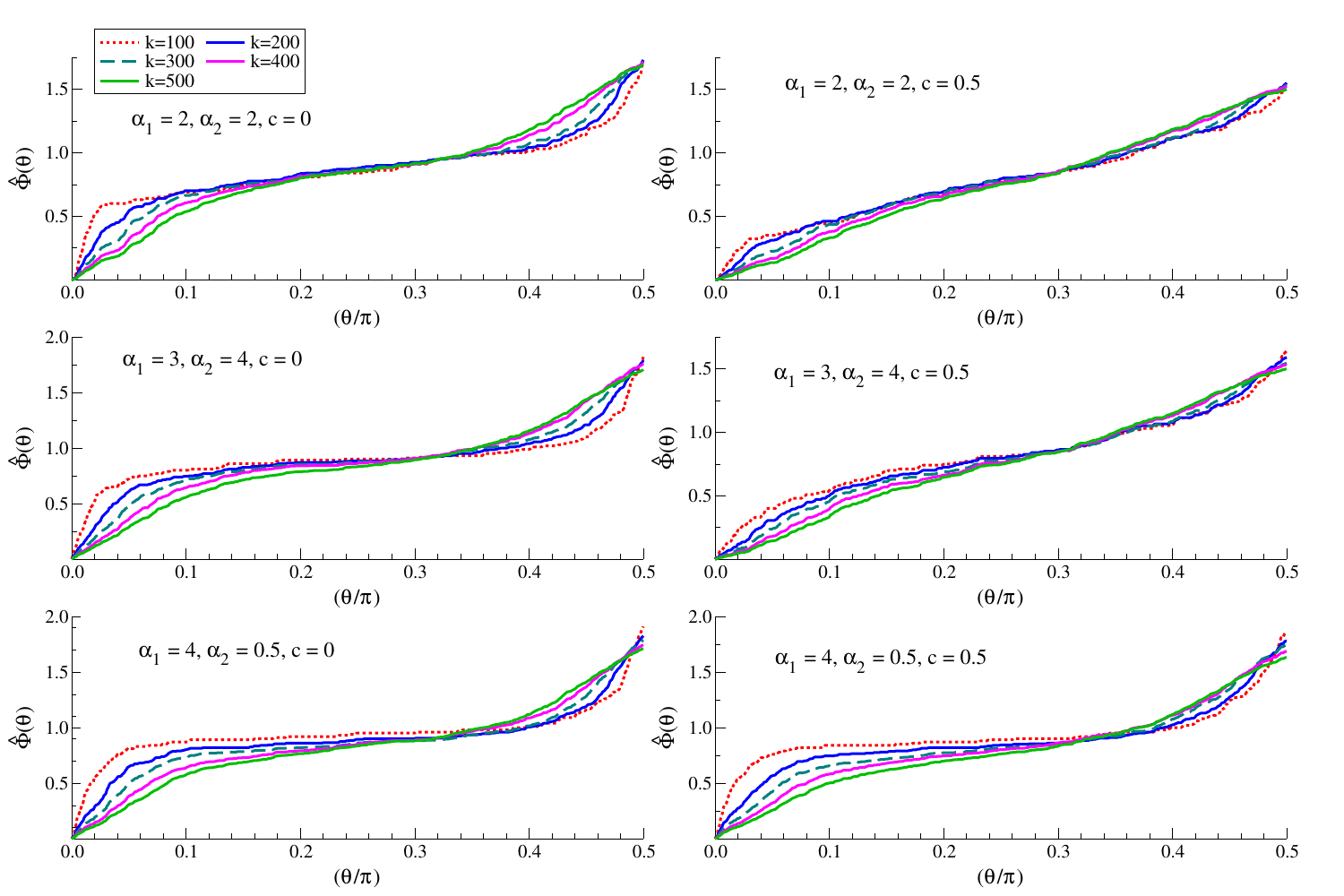}
  \caption{Nonparametric estimates for $k=100,200,300,400,500$ and for various choices of $\alpha_1$, $\alpha_2$, and $c$.}\label{fig1}
\end{figure}

%

\end{document}